\numberwithin{equation}{section}
\newcommand{\nc}{\newcommand}
\nc{\on}{\operatorname}
\newtheorem{theorem}{Theorem}[section]
\newtheorem{proposition}[theorem]{Proposition}
\newtheorem{lemma}[theorem]{Lemma}
\newtheorem{corollary}[theorem]{Corollary}
\theoremstyle{definition}
\newtheorem{definition}[theorem]{Definition}
\newtheorem{example}[theorem]{Example}
\newtheorem{remark}[theorem]{Remark}
\nc{\RR}{\mathrm{R}}
\nc{\LL}{\mathrm{L}}
\newcommand{\RC}{{\rm C}}
\newcommand{\C}{{\mathbb{C}}}
\newcommand{\N}{{\mathbb{N}}}
\newcommand{\Z}{{\mathbb{Z}}}
\def\phi{{\varphi}}
\def\epsilon{\varepsilon}
\newcommand{\cor}{{\bf k}}
\newcommand{\bfone}{{\bf 1}}
\def\shc{\mathscr{C}}
\def\shd{\mathscr{D}}
\def\she{\mathscr{E}}
\def\shf{\mathscr{F}}
\def\shm{\mathscr{M}}
\def\shn{\mathscr{N}}
\def\sho{\mathscr{O}}
\def\shp{\mathscr{P}}
\def\shr{\mathscr{R}}
\def\sht{\mathscr{T}}
\renewcommand{\ker}{\operatorname{Ker}}
\DeclareMathOperator{\coker}{Coker}
\DeclareMathOperator{\im}{Im}
\DeclareMathOperator{\coim}{Coim}
\DeclareMathOperator{\Rees}{\Sigma}
\DeclareMathOperator{\LH}{LH}
\newcommand{\rmptt}{{\{\rm pt\}}}
\newcommand{\into}{\hookrightarrow}
\def\epito{{\twoheadrightarrow}}
\renewcommand{\to}[1][]{\xrightarrow[]{#1}}
\newcommand{\isoto}[1][]{\xrightarrow[#1]%
{{\raisebox{-.6ex}[0ex][-.6ex]{$\mspace{1mu}\sim\mspace{2mu}$}}}}
\newcommand{\muHom}[1][]{\mathrm{Hom}^\mu_{\raise1.5ex\hbox to.1em{}#1}}
\newcommand{\Hom}[1][]{\mathrm{Hom}_{\raise1.5ex\hbox to.1em{}#1}}
\newcommand{\RHom}[1][]{\RR\mathrm{Hom}_{\raise1.5ex\hbox to.1em{}#1}}
\newcommand{\Ext}[2][]{\mathrm{Ext}_{\raise1.5ex\hbox to.1em{}#1}^{#2}}
\renewcommand{\hom}[1][]{{\mathscr{H}\mspace{-4mu}om}_{\raise1.5ex\hbox to.1em{}#1}}
\newcommand{\rhom}[1][]{{\RR\mathscr{H}\mspace{-3mu}om}_{\raise1.5ex\hbox to.1em{}#1}}
\newcommand{\rhomc}[1][]
{{\mathscr{H}\mspace{-3mu}om}^*_{\raise1.5ex\hbox to.1em{}#1}}
\newcommand{\ext}[2][]{{\mathscr{E}xt}_{\raise1.5ex\hbox to.1em{}#1}^{#2}}
\newcommand{\Tor}[2][]{\mathrm{Tor}^{\raise1.5ex\hbox to.1em{}#1}_{#2}}
\newcommand{\tens}[1][]{\mathbin{\otimes_{\raise1.5ex\hbox to-.1em{}{#1}}}}
\newcommand{\Endo}[1][]{\mathrm{End}_{\raise1.5ex\hbox to.1em{}#1}}
\newcommand{\Aut}[1][]{\mathrm{Aut}_{\raise1.5ex\hbox to.1em{}#1}}
\newcommand{\Mod}{\mathrm{Mod}}
\newcommand{\eqdot}{\mathbin{:=}}
\newcommand{\cl}{\colon}
\newcommand{\scbul}{{\,\raise.4ex\hbox{$\scriptscriptstyle\bullet$}\,}}
\newcommand{\lp}{{\rm(}}
\newcommand{\rp}{{\rm)}}
\newcommand{\ba}{\begin{array}}
\newcommand{\ea}{\end{array}}
\newcommand{\bnum}{\begin{enumerate}[{\rm(i)}]}
\newcommand{\enum}{\end{enumerate}}
\newcommand{\banum}{\begin{enumerate}[{\rm(a)}]}
\newcommand{\eanum}{\end{enumerate}}
\newcommand{\eq}{\begin{eqnarray}}
\newcommand{\eneq}{\end{eqnarray}}
\newcommand{\eqn}{\begin{eqnarray*}}
\newcommand{\eneqn}{\end{eqnarray*}}
\nc{\Proof}{\begin{proof}}
\nc{\QED}{\end{proof}}
\def\rop{{\rm op}}
\def\Xsa{{X_{\rm sa}}}
\DeclareMathOperator{\id}{id}
\DeclareMathOperator{\For}{for}
\newcommand{\Der}[1][]{\mathsf{D}^{#1}}
\newcommand{\Derb}{\Der[\mathrm{b}]}
\newcommand{\RD}{\mathrm{D}}
\newcommand{\rb}{{\mathrm b}}
\newcommand{\ub}{\mathrm{ub}}
\newcommand{\Mc}{\mathrm{Mc}}
\newcommand{\rK}{{\mathrm K}}
\newcommand{\qis}{\mathrm {qis}}
\newcommand{\mon}{\Lambda}
\newcommand{\Fct}{\operatorname{Fct}}
\newcommand{\Fil}{\operatorname{F}}
\newcommand{\dT}{{\dot{T}}}
\newcommand{\indlim}[1][]{\mathop{\varinjlim}\limits_{#1}}
\newcommand{\sindlim}[1][]{\smash{\mathop{\varinjlim}\limits_{#1}}\,}
\nc{\eps}{\varepsilon}
\nc{\hs}{\hspace*}
\nc{\nn}{\nonumber}
\nc{\tM}{\widetilde{M}}
\nc{\h}{\mathbf{h}}
\nc{\tf}{\tilde{f}}
\nc{\codim}{\on{codim}}
\nc{\lh}{\mathscr{H}}
\nc{\bwr}{\mbox{\large{$\wr$}}}
\nc{\dTi}{\dT^{*,\mathrm{in}}}
\nc{\Cd}{\mathrm{C}}
\nc{\Filt}{\mathrm{Filt}}
\newcommand{\comp}{\mathbin{\circ}}
\newenvironment{rouge}
{\relax\color{red}}
{\hspace*{.3ex}\relax}
\newcommand{\ber}{\begin{rouge}}
\newcommand{\er}{\end{rouge}}
\newenvironment{bleu}
{\relax\color{blue}}
{\hspace*{.3ex}\relax}
\newcommand{\beb}{\begin{bleu}}
\newcommand{\eb}{\end{bleu}}
\begin{document}

\title{Derived category of filtered objects}
\author{Pierre Schapira and Jean-Pierre Schneiders} 
\maketitle

\begin{abstract}
For an abelian category $\shc$ and a filtrant preordered set $\mon$, we prove that the derived category of the quasi-abelian category of filtered objects in $\shc$ indexed by $\mon$ is equivalent to the derived category of the abelian category of functors from 
$\mon$ to $\shc$. We apply this result to the study of the category of filtered modules over a filtered ring in a tensor category. 
\end{abstract}
\section{Introduction}
Filtered modules over filtered sheaves of rings appear naturally in mathematics, such as for example when studying 
$\shd_X$-modules on a complex manifold $X$, $\shd_X$ denoting the  filtered ring of differential operators (see~\cite{Ka03}). 
As it is well-known, the  category of filtered modules over a filtered ring is not abelian,  only exact in the sense of Quillen~\cite{Qu73} or quasi-abelian in the sense of~\cite{Sn99}, but this is enough to consider the derived category 
(see~\cite{BBD82,La83}). 
However, quasi-abelian categories are not easy to manipulate, and we shall show in this paper how 
to substitute a very natural abelian category to this quasi-abelian category, giving the same derived category. 

More precisely, consider an abelian category $\shc$ admitting small exact filtrant (equivalently, ``directed'')
 colimits and a filtrant preordered set  $\mon$.
In this paper, we regard a filtered object in $\shc$ as a functor $M\cl\mon\to\shc$ with the property that 
all $M(\lambda)$ are sub-objects  of $\sindlim M$. 
We prove that the derived category of the quasi-abelian category of filtered objects in $\shc$ indexed by $\mon$ is equivalent to the derived category of the abelian category of functors from $\mon$ to $\shc$. 
 Note that a particular case of this result, in which $\mon=\Z$ and $\shc$ is the category of abelian groups, was already obtained in~\cite[\S~3.1]{Sn99}. 

Next, we assume that $\shc$ is a tensor category and $\mon$ is a preordered semigroup. 
In this case, we can define what is a filtered ring $A$ indexed by $\mon$ and a filtered $A$-module in $\shc$ and we prove a similar result to the preceding one, namely that the derived category of the category of filtered $A$-modules is equivalent to the derived category of the abelian category of modules over the $\mon$-ring $A$. 

Applications to the study of filtered $\shd_X$-modules will be developed in the future. 
Indeed it is proved in~\cite{GS12} that,  on a complex manifold $X$ endowed with the subanalytic topology $\Xsa$, the sheaf 
$\sho_\Xsa$ (which is in fact an object of the derived category of sheaves, no more concentrated in degree zero) may be endowed 
with various filtrations and the results of this paper will be used when developing this point. 

\section{A review on quasi-abelian categories}\label{sec:abel}
In this section, we briefly review the main notions on quasi-abelian categories 
and their derived categories, after~\cite{Sn99}. 
We refer to \cite{KS06} for an exposition on abelian, triangulated and 
derived categories.

Let $\shc$ be an additive category admitting kernels and cokernels.
Recall that, for a morphism $f\cl X\to Y$ in $\shc$,
$\im(f)$ is the kernel of $Y\to\coker(f)$,
and $\coim(f)$ is the cokernel of $\ker(f)\to X$.
Then $f$ decomposes as 
\eqn
&&X\to\coim(f)\to\im(f)\to Y.
\eneqn
One  says that $f$ is {\em strict}
if $\coim(f)\to\im(f)$ is an isomorphism.
Note that a monomorphism (resp.{} an epimorphism) $f\cl X\to Y$ is strict 
if and only if $X\to \im(f)$
(resp.{} $\coim(f)\to Y$) is an isomorphism.
For any morphism $f\cl X\to Y$,
\begin{itemize}
\item
$\ker(f)\to X$ and  $\im(f)\to Y$ are strict monomorphisms,
\item
$X\to\coim(f)$ and $Y\to\coker(f)$ are strict epimorphisms.
\end{itemize}
Note also that a morphism $f$ is strict if and only if it factors as
$i\circ s$ with a strict epimorphism $s$ and a strict monomorphism $i$.

\begin{definition}\label{def:qabcat}
A {\em quasi-abelian} category is an additive category which admits
kernels and cokernels and satisfies the following conditions:
\bnum
\item
strict epimorphisms are stable by base changes,
\item
strict monomorphisms are stable by co-base changes.
\enum
\end{definition}
The condition (i) means that, for any strict epimorphism
$u\cl X\to Y$ and any morphism $Y'\to Y$,
setting $X'=X\times_{Y}Y'=\ker(X\times Y'\to Y)$,
the composition $X'\to X\times Y'\to Y'$ is a strict epimorphism.
The condition (ii) is  similar by reversing the arrows.

Note that, for any morphism $f\cl X\to Y$ in a quasi-abelian category,
$\coim(f)\to \im(f)$ is both a monomorphism and
an epimorphism.

Remark that if $\shc$ is a quasi-abelian category,
then its opposite category $\shc^\rop$ is also quasi-abelian.

Of course, an abelian category is quasi-abelian category category in which all morphisms are strict.

\begin{definition}
Let $\shc$ be a  quasi-abelian category.  
A sequence $M' \to[f] M \to[f'] M''$ with $f'\comp f=0$ 
is strictly exact if  $f$ is strict and the canonical morphism
$\im f \to \ker f'$ is an isomorphism.
\end{definition}
Equivalently such a  sequence is strictly exact if the canonical morphism
$\coim f \to \ker f'$ is an isomorphism. 

One shall be aware that the notion of strict exactness is not auto-dual.

Consider  a functor $F\cl\shc\to\shc'$ of quasi-abelian categories. Recall that $F$  is 
\begin{itemize}
\item
strictly exact if it sends any strict exact sequence 
$X'\to X\to X''$ to a strict exact sequence,
\item
strictly left exact if it sends any strict exact sequence  $0\to X'\to X\to X''$ to a strict exact sequence
$0\to F(X')\to F(X)\to F(X'')$,
\item 
left exact if it sends any strict exact sequence  $0\to X'\to X\to X''\to 0$ to a strict exact sequence
$0\to F(X')\to F(X)\to F(X'')$.
\end{itemize}

\subsubsection*{Derived categories}

Let $\shc$ be an additive category. One denotes as usual 
by $\RC(\shc)$ the additive category consisting of complexes in $\shc$.
For $X\in\RC(\shc)$, one denotes by 
$X^n$ ($n\in\Z$) its $n$'s component and by 
$d_X^n\colon X^n\to X^{n+1}$ the differential. 
For $k\in\Z$, one denotes by $X\mapsto X[k]$  the  shift functor 
(also called translation functor).
We denote by $\RC^+(\shc)$ (resp.\ $\RC^-(\shc)$, $\RC^\rb(\shc)$) the
full subcategory of 
$\RC(\shc)$ consisting of objects $X$ such that $X^n=0$ for 
$n\ll0$ (resp.\ $n\gg0$, $\vert n\vert\gg0$). One also sets 
$\RC^\ub(\shc)\eqdot \RC(\shc)$ ($\ub$ stands for unbounded).

We do not recall here neither the construction of the mapping cone
$\Mc(f)$ of a morphism $f$ in $\RC(\shc)$ nor the construction of 
the triangulated categories
$\rK^*(\shc)$ ($*=\ub,+,-,\rb$), called the homotopy categories of $\shc$. 

Recall that a null system $\shn$ in a triangulated category $\sht$ is
a full triangulated saturated subcategory of $\sht$, saturated
meaning that an object $X$ belongs to
$\shn$ whenever $X$ is isomorphic to an object of $\shn$. 
For a null system $\shn$, the localization $\sht/\shn$ is a
triangulated category. 
A distinguished triangle $X\to Y\to Z\to X[1]$ in $\sht/\shn$ 
is a triangle 
isomorphic to the image of
a distinguished triangle in $\sht$.

Let $\shc$ be quasi-abelian category.
One says that a complex $X$ is  
\begin{itemize}
\item
{\em strict} if all the differentials $d_X^n$ are strict,
\item
{\em strictly exact in degree $n$ } 
if the sequence $X^{n-1}\to X^n\to X^{n+1}$ is strictly exact.
\item
{\em strictly exact} 
if it is strictly exact in all degrees.
\end{itemize}
If $X$ is strictly exact, then
$X$ is a strict complex and $0\to \ker(d_X^{n})\to X^n\to \ker(d_X^{n+1})\to 0$
is strictly exact for all $n$.

Note that if two complexes $X$ and $Y$ are isomorphic in $\rK(\shc)$,
and if $X$ is strictly exact,
then so is $Y$. Let $\she$ be the full additive subcategory of $\rK(\shc)$
consisting of strictly exact complexes.
Then $\she$ is a null system in $\rK(\shc)$.

\begin{definition}
The derived category $\RD(\shc)$ is the quotient category $\rK(\shc)/\she$.
where $\she$ is the null system in $\rK(\shc)$ consisting of strictly exact complexes.
One defines similarly the categories $\RD^*(\shc)$ for $*=+,-,\rb$.
\end{definition}
A morphism $f\cl X\to Y$ in $\rK(\shc)$ is called a {\em quasi-isomorphism}
(a $\qis$ for short) if, after being embedded in a distinguished triangle
$X\to[\,f\,] Y\to Z\to X[1]$, $Z$ belongs to $\she$.  This is
equivalent to saying that its image in $\RD(\shc)$ is an isomorhism.
It follows that given morphisms $X\to[f]Y\to[g]Z$ in $\rK(\shc)$,
if two of $f$, $g$ and $g\circ f$ are $\qis$, then all the three are $\qis$.

Note that if $X\to[f]Y\to[g]Z$ is a sequence of morphisms in $\RC(\shc)$
such that $0\to X^n\to Y^n\to Z^n\to 0$ is strictly exact for all $n$, then
the natural morphism $\Mc(f)\to Z$ is a $\qis$, and we have a distinguished triangle 
\eqn
&& X\to Y\to Z\to X[1]
\eneqn
in $\RD(\shc)$.

\subsubsection*{Left $t$-structure}\label{subsec:t}
Let $\shc$ be a quasi-abelian category.
Recall that for $n\in\Z$,  $\RD^{\leq n}(\shc)$ (resp.\ $\RD^{\geq n}(\shc)$) denotes the full subcategory of $\RD(\shc)$ consisting of complexes $X$ which are strictly exact in degrees $k>n$ (resp.\ $k<n$). 
Note that $\RD^+(\shc)$ (resp.{} $\RD^-(\shc)$) is the union of
all the $\RD^{\ge n}(\shc)$'s (resp.{} all the $\RD^{\le n}(\shc)$'s), 
and $\Derb(\shc)$ is the intersection
$\RD^+(\shc)\cap\RD^-(\shc)$.
The associated  truncation functors are then given by:
\eqn
\hs{0ex}
\begin{array}{lccccccccccccccc}
\tau^{\le n}X\cl \cdots   &\to&X^{n-2}&\to& X^{n-1}            &\to&\ker d_X^n&\to&0            &\to&\cdots\\[1ex]
\tau^{\ge n}X\cl \cdots  &\to&0         &\to&\coim d_X^{n-1}&\to&X^n          &\to& X^{n+1}&\to&\cdots.
\end{array}
\eneqn
The functor $\tau^{\le n} \cl\RD(\shc)\to\RD^{\le n}(\shc)$
is a right adjoint to the inclusion functor
$\RD^{\le n}(\shc)\hookrightarrow \RD(\shc)$,
and $\tau^{\ge n} \cl\RD(\shc)\to\RD^{\ge n}(\shc)$
is a left adjoint functor to the inclusion functor $\RD^{\ge n}(\shc)\hookrightarrow\RD(\shc)$.

The pair $(\RD^{\le 0}(\shc),\RD^{\ge0}(\shc))$ defines a t-structure on $\RD(\shc)$ by~\cite{Sn99}. 
We refer to~\cite{BBD82} for the general theory of $t$-structures (see also \cite{KS90} for an exposition).

The heart $\RD^{\le 0}(\shc)\cap\RD^{\geq0}(\shc)$ is an abelian category 
called the left heart of $\RD(\shc)$ and denoted $\LH(\shc)$  in~\cite{Sn99}.
The embedding $\shc\hookrightarrow\LH(\shc)$ induces an equivalence
\eqn
&&\RD(\shc)\isoto\RD(\LH(\shc)).
\eneqn
By duality, one also defines the right $t$-structure and the right heart of $\RD(\shc)$.

\subsubsection*{Derived functors}
Given an additive functor $F\cl\shc\to\shc'$ of quasi-abelian categories, its right or left derived functor is defined in~\cite[Def.~1.3.1]{Sn99} by the same procedure as for abelian categories.

\begin{definition}{\rm (See~\cite[Def.~1.3.2]{Sn99}.)}\label{def:Fprojectcat}
A full additive subcategory $\shp$ of $\shc$ is called $F$-projective if
\banum
\item
for any $X\in\shc$, there exists a strict epimorphism $Y\to X$ with $Y\in\shp$,
\item
for any strict exact sequence $0\to X'\to X\to X''\to 0$ in $\shc$, if $X,X''\in\shp$, then 
$X'\in\shp$, 
\item
for any strict exact sequence $0\to X'\to X\to X''\to 0$ in $\shc$, if $X',X,X''\in\shp$, then
the sequence 
$0\to F(X')\to F(X)\to F(X'')\to 0$ in strictly exact in $\shc'$.
\eanum
\end{definition}
If $F$ admits an $F$-projective category, one says that $F$ is explicitly left derivable.
In this case, $F$ admits a left derived functor $LF$ and this functor is calculated as usual by 
the formula
\eqn
&& LF(X)\simeq F(Y), \text{ where } Y\in\rK^-(\shp), Y\isoto X \text{ in }\RD^-(\shc).
\eneqn
We refer to~\cite[\S 1.3]{Sn99} for details. 

If $LF$ has bounded cohomological dimension, then it extends as a triangulated functor 
$LF\cl\RD(\shc)\to\RD(\shc')$.

\section{Filtered objects}\label{section:filtobj}
We shall assume 
\eq\label{hyp:hypsection3}
&&\left\{\parbox{60ex}{
$\mon$ is a small filtrant category,\\
$\shc$ is an abelian
category admitting  small inductive limits and filtrant such limits are exact. 
}\right.\eneq

Denote by 
$\Fct(\mon,\shc)$ the abelian category of functors from $\mon$ 
to $\shc$, and denote as usual by $\Delta\cl \shc\to \Fct(\mon,\shc)$ the functor which, to $X\in\shc$,
associates the constant functor $\lambda\mapsto X$ and by $\sindlim\cl \Fct(\mon,\shc)\to\shc$ the inductive limit functor. 
Then $(\sindlim, \Delta)$ is a pair of  adjoint functors:

If $M\in\Fct(\mon,\shc)$, we set for short  $M(\infty)\eqdot\sindlim M$ and we denote by 
$j_M(\lambda)$ the morphism $M(\lambda)\to M(\infty)$. If $f\cl M\to M'$ is a morphism in 
$\Fct(\mon,\shc)$, we denote fy $f(\infty)\cl M(\infty) \to M'(\infty)$ the associated morphism.

\begin{definition}
\banum
\item
The category $\Fil_\Lambda\shc$ of $\mon$-filtered 
objects in $\shc$ is the full additive subcategory of $\Fct(\mon,\shc)$ formed by the functors which send morphisms to monomorphisms. 
\item
We denote by $\iota\cl\Fil_\Lambda\shc\into\Fct(\mon,\shc)$
the inclusion functor.
\eanum
\end{definition}
Inductive limits in $\shc$ being exact, the morphisms $j_M(\lambda)$ are monomorphisms. 

\begin{remark}\label{rem:poset}
(i) Let $M$ be a $\mon$-filtered object of $\shc$ and let $\lambda,\lambda'\in\mon$. Since
$j_M(\lambda)\comp M(s) = j_M(\lambda')$
for any morphism $s:\lambda'\to\lambda$ of $\Lambda$ and since $j_M(\lambda)$ is a monomorphism, it is clear that $M(s)$ does not depend on $s$. It follows that the category $\Fil_\mon(\shc)$ is equivalent to the category $\Fil_{\mon_{\text{pos}}}(\shc)$ where $\mon_{\text{pos}}$ denotes the category corresponding to the preordered set associated with $\mon$, \emph{i.\ e.} the category having the same objects as $\mon$ but for which
\eqn
	\Hom[\mon{\text{pos}}](\lambda',\lambda) = \left\{\begin{array}{ccc}
	\rmptt&\text{ if }&\Hom[\mon](\lambda',\lambda) \neq \emptyset,\\
	\emptyset&\text{ if }&\Hom[\mon](\lambda',\lambda) = \emptyset.
	\end{array}\right.
\eneqn
Therefore, when we study the properties of $\Fil_\mon(\shc)$ we can always assume that $\mon$ is the category associated with a preordered set.

\vspace{0.3ex}\noindent
(ii) When $\mon$ is a preordered set, $M$ defines an increasing map from $\mon$ to the poset of subobjects of $M(\infty)$. 
Moreover, $M(\infty)$ is the union of the $M(\lambda)$'s and we recover the usual notion of an exhaustive filtration. 
\end{remark}

\subsubsection*{Basic properties of $\Fil_\mon(\shc)$}
In this subsection, we shall prove that the  category $\Fil_\mon(\shc)$   is quasi-abelian. 
The next result is obvious. 

\begin{proposition}\label{prp:iota}
The subcategory $\Fil_\mon(\shc)$ of $\Fct(\mon,\shc)$ is stable by subobjects. 
In particular, the category $\Fil_\mon(\shc)$ admits kernels and 
the functor $\iota$ commutes with kernels.
\end{proposition}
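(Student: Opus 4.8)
The plan is to reduce both assertions to the fact that the abelian structure on $\Fct(\mon,\shc)$ is computed objectwise. Since $\shc$ is abelian, $\Fct(\mon,\shc)$ is abelian, and kernels, cokernels and in particular monomorphisms in it are detected pointwise: a morphism is a monomorphism if and only if each of its components is one in $\shc$. I take this description as the starting point; it is the only place where the abelian hypothesis on $\shc$ really enters.

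To prove stability by subobjects, let $M\in\Fil_\mon(\shc)$ and let $u\cl N\monoto M$ be a monomorphism in $\Fct(\mon,\shc)$; I must check that $N$ sends every morphism of $\mon$ to a monomorphism. Fix $s\cl\lambda'\to\lambda$ in $\mon$ and consider the naturality square for $u$ along $s$, which gives $u(\lambda)\comp N(s)=M(s)\comp u(\lambda')$. By the pointwise description $u(\lambda')$ is a monomorphism, and $M(s)$ is a monomorphism because $M\in\Fil_\mon(\shc)$; hence the right-hand composite, and therefore $u(\lambda)\comp N(s)$, is a monomorphism. Since a morphism $f$ is a monomorphism whenever the composite $g\comp f$ is a monomorphism for some $g$, I conclude that $N(s)$ is a monomorphism, so $N\in\Fil_\mon(\shc)$.

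Finally, for any morphism $f\cl M\to M'$ in $\Fil_\mon(\shc)$, its kernel formed in $\Fct(\mon,\shc)$ is a subobject of $M$, hence lies in $\Fil_\mon(\shc)$ by the previous step. As $\iota$ is fully faithful and $\Fil_\mon(\shc)$ is full in $\Fct(\mon,\shc)$, this object satisfies the universal property of the kernel of $f$ inside $\Fil_\mon(\shc)$ as well; thus $\Fil_\mon(\shc)$ admits kernels and $\iota$ commutes with them. I anticipate no genuine obstacle here, since the statement is essentially formal: the only point requiring attention is the objectwise detection of monomorphisms and kernels recalled at the outset, after which everything follows from the naturality square and the left-cancellation property of monomorphisms.
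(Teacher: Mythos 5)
Your proof is correct and is exactly the standard argument the authors have in mind when they declare the result ``obvious'' and omit the proof: pointwise detection of monomorphisms in $\Fct(\mon,\shc)$, the naturality square with left-cancellation of monomorphisms for stability under subobjects, and fullness of $\Fil_\mon(\shc)$ to transfer the universal property of the kernel. No gaps.
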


\begin{definition}\label{def:kappa}
Let $M\in\Fct(\Lambda,\shc)$. For $\lambda\in\mon$, we set
$\kappa(M)(\lambda) = \im j_M(\lambda)$
and for a morphism $s\cl\lambda'\to\lambda$ in $\mon$ we define
$\kappa(M)(s)$ as the morphism induced by the identity of $M(\infty)$. 
\end{definition}
These definitions turn obviously $\kappa(M)$ into an object of $\Fil_\mon(\shc)$ and give a functor
\eq\label{eq:fctkappa}
&&\kappa : \Fct(\mon,\shc) \to \Fil_\mon(\shc).
\eneq

\begin{proposition}\label{prp:kappa}
The functor $\kappa$ in~\eqref{eq:fctkappa}
is  left adjoint to the inclusion functor $\iota$
and $\kappa\comp\iota\simeq\id_{\Fil_\mon(\shc)}$. 
In particular the category  $\Fil_\mon(\shc)$ admits cokernels and $\kappa$ commutes with cokernels.
\end{proposition}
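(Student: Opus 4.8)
The plan is to exhibit the adjunction by hand, via an explicit unit and an invertible counit, and then to deduce the remaining assertions from formal properties of adjoint functors. First I would construct the unit $\eta\cl\id_{\Fct(\mon,\shc)}\to\iota\comp\kappa$. For $M\in\Fct(\mon,\shc)$ and $\lambda\in\mon$, the morphism $j_M(\lambda)\cl M(\lambda)\to M(\infty)$ factors in the abelian category $\shc$ through its image, yielding a canonical epimorphism $\eta_M(\lambda)\cl M(\lambda)\epito\im j_M(\lambda)=\kappa(M)(\lambda)$. Since the transition morphisms of $\kappa(M)$ are induced by $\id_{M(\infty)}$ and the $j_M(\lambda)$ are compatible with the transition morphisms of $M$, these $\eta_M(\lambda)$ assemble into a morphism of functors, visibly natural in $M$.

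Next I would record the counit. When $N\in\Fil_\mon(\shc)$, the morphism $j_N(\lambda)$ is a monomorphism, so $\im j_N(\lambda)=N(\lambda)$ as subobjects of $N(\infty)$; hence $\kappa\comp\iota\simeq\id_{\Fil_\mon(\shc)}$. This is already the second assertion, and in the language of adjunctions it says that the counit is invertible, which is consistent with $\iota$ being fully faithful (as it is a full subcategory inclusion).

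The heart of the matter is the adjunction bijection
\[
\Hom[\Fil_\mon(\shc)](\kappa(M),N)\;\isoto\;\Hom[\Fct(\mon,\shc)](M,\iota N),\qquad f\longmapsto \iota(f)\comp\eta_M .
\]
To invert it, given $g\cl M\to\iota N$ I would apply $\sindlim$ to obtain $g(\infty)\cl M(\infty)\to N(\infty)$ and restrict it along $\kappa(M)(\lambda)=\im j_M(\lambda)\into M(\infty)$. The only non-formal step — the expected main obstacle — is that this restriction factors through $N(\lambda)\into N(\infty)$. Indeed the restriction is the composite $\kappa(M)(\lambda)\into M(\infty)\to[g(\infty)]N(\infty)$, whose image equals $\im\bigl(M(\lambda)\to[j_M(\lambda)]M(\infty)\to[g(\infty)]N(\infty)\bigr)$ because $M(\lambda)\epito\im j_M(\lambda)$; by naturality of $g$ this composite is also $M(\lambda)\to[g(\lambda)]N(\lambda)\to[j_N(\lambda)]N(\infty)$, whose image is contained in $\im j_N(\lambda)=N(\lambda)$ precisely because $N$ is a filtered object. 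As $j_N(\lambda)$ is a monomorphism the factorization is unique, defining $\Phi(g)(\lambda)\cl\kappa(M)(\lambda)\to N(\lambda)$; naturality in $\lambda$ holds since every $\Phi(g)(\lambda)$ is a corestriction of the single morphism $g(\infty)$. That $\Phi$ and $f\mapsto\iota(f)\comp\eta_M$ are mutually inverse then follows from $\eta_M(\lambda)$ being an epimorphism and $j_N(\lambda)$ a monomorphism, so both composites are pinned down after composition with $j_N(\lambda)$.

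Finally, the ``in particular'' is formal. The functor category $\Fct(\mon,\shc)$ is abelian, hence has cokernels; for $f\cl M\to N$ in $\Fil_\mon(\shc)$ I would form $C=\coker(\iota f)$ there and check, using the adjunction together with the full faithfulness of $\iota$, that $\Hom[\Fil_\mon(\shc)](\kappa(C),P)\simeq\set{h\cl N\to P}{h\comp f=0}$ naturally in $P\in\Fil_\mon(\shc)$. Thus $\kappa(C)$ is a cokernel of $f$, so $\Fil_\mon(\shc)$ admits cokernels, and $\kappa$ commutes with cokernels because a left adjoint preserves all colimits that exist.
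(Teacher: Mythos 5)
Your proposal is correct and follows essentially the same route as the paper: both establish the adjunction by factoring a morphism $M\to\iota N$ through $\kappa(M)(\lambda)=\im j_M(\lambda)$ using that $j_N(\lambda)$ is a monomorphism, and both deduce cokernels and their preservation formally from the adjunction with $\kappa\comp\iota\simeq\id$. Your version merely spells out the unit/counit and the inverse bijection in more detail than the paper, which asserts the isomorphism of Hom-groups directly.
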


\begin{proof}
Let $M$ be an object of $\Fil_\mon(\shc)$ and let $M'$ be an object of $\Fct(\Lambda,\shc)$. Consider a morphism
$f : M' \to \iota(M)$
in $\Fct(\Lambda,\shc)$. It induces a morphism $f(\infty): M'(\infty) \to M(\infty)$
in $\shc$. Since the diagram
\eqn
&&
\xymatrix{
M'(\lambda) \ar[d]_-{j_{M'}(\lambda)} \ar[r]_{f(\lambda)} & M(\lambda) \ar[d]^-{j_M(\lambda)}\\
M'(\infty) \ar[r]^{f(\infty)} & M(\infty) 
}
\eneqn
is commutative for every object $\lambda$ of $\Lambda$ and since $j_M(\lambda)$ is a monomorphism, the morphism $f(\lambda)$ induces a canonical morphism $f'(\lambda):\im j_{M'}(\lambda)\to M(\lambda)$
and these morphisms give us a morphism $f':\kappa(M')\to M$.
The preceding construction gives a morphism of abelian groups
\eqn
&&	\Hom[\Fct(\Lambda,\shc)](M',\iota(M)) \to \Hom[\Fil_\Lambda\shc](\kappa(M'),M)
\eneqn
and it is clearly an isomorphism. Since $\iota$ is fully faithful, we have 
$\kappa\comp\iota\simeq\id_{\Fil_\Lambda\shc}$ and the conclusion follows.
\end{proof}
By the preceding results, the category $\Fil_\mon(\shc)$ is additive and has kernels and cokernels and hence images and coimages. However, if $f\cl M'\to M$ is a morphism in $\Fil_\mon(\shc)$ the canonical morphism from $\coim(f)$ to $\im(f)$ is in general not an isomorphism, in other words, $f$ is not in general a strict morphism
and  the inclusion functor $\iota$ does not commute with  cokernels (see Example~\ref{exa:coimnotim} below).

\begin{corollary}\label{cor:strictexactness}
Let $f\cl M'\to M$ be a morphism in $\Fil_\mon(\shc)$  and let 
$\ker$, $\coker$, $\im$ and $\coim$ be calculated in the category $\Fil_\mon(\shc)$.
Then, one has the canonical isomorphisms 
for $\lambda\in\mon$:
\banum
\item 
$(\ker f)(\lambda)\simeq \ker f(\lambda)$,
\item
$(\coker f)(\lambda)\simeq \im [M(\lambda)\to\coker f(\infty)]$,
\item
$(\im f)(\lambda)\simeq \ker [M(\lambda)\to\coker f(\infty)]$,
\item
$(\coim f)(\lambda)\simeq \im f(\lambda)$.
\eanum
In particular, 
 \bnum
 \item
 $f$ is strict in $\Fil_\mon(\shc)$ if and only if the canonical square below is Cartesian
\eqn
\xymatrix{
\im f(\lambda) \ar[d] \ar[r] & \im f(\infty)\ar[d]\\
M(\lambda) \ar[r] & M(\infty),
}	
\eneqn
\item
a sequence $M' \to[f] M \to[f'] M''$ in $\Fil_\mon(\shc)$   with $f'\comp f=0$ is strictly exact 
if and only if the canonical morphism $\im f(\lambda) \to \ker f'(\lambda)$
is an isomorphism for any $\lambda\in\mon$.
\enum
\end{corollary}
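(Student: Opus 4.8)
The plan is to reduce every computation to two structural facts already available. First, kernels in $\Fil_\mon(\shc)$ are pointwise: by Proposition~\ref{prp:iota} the inclusion $\iota$ commutes with kernels, and kernels in $\Fct(\mon,\shc)$ are computed pointwise. Second, cokernels are obtained by applying $\kappa$ to the pointwise cokernel: being a left adjoint, $\kappa$ commutes with cokernels (Proposition~\ref{prp:kappa}), and $\kappa\comp\iota\simeq\id$, so for $f\cl M'\to M$ one has $\coker f\simeq\kappa(C)$ in $\Fil_\mon(\shc)$, where $C\in\Fct(\mon,\shc)$ is the pointwise cokernel $C(\lambda)=\coker f(\lambda)$. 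Since $\sindlim$ is itself a colimit it commutes with cokernels, giving $C(\infty)\simeq\coker f(\infty)$; the hypothesis~\eqref{hyp:hypsection3} that filtrant colimits are exact will be used to commute $\sindlim$ past kernels, hence past images and coimages, for instance to identify $\sindlim\ker f(\lambda)\simeq\ker f(\infty)$ in part (d).

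First I would dispatch (a): since $\iota$ commutes with kernels and these are pointwise in $\Fct(\mon,\shc)$, one gets $(\ker f)(\lambda)\simeq\ker f(\lambda)$ at once. For (b), combining $\coker f\simeq\kappa(C)$ with $\kappa(C)(\lambda)=\im j_C(\lambda)$ (Definition~\ref{def:kappa}) and $C(\infty)\simeq\coker f(\infty)$ yields $(\coker f)(\lambda)\simeq\im[\coker f(\lambda)\to\coker f(\infty)]$; since the projection $M(\lambda)\epito\coker f(\lambda)$ is an epimorphism and images are insensitive to precomposition with epimorphisms, this image equals $\im[M(\lambda)\to\coker f(\infty)]$, as claimed. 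Then (c) follows formally: $\im f=\ker(M\to\coker f)$, so by pointwise kernels and (b) one has $(\im f)(\lambda)=\ker\bigl(M(\lambda)\epito\im[M(\lambda)\to\coker f(\infty)]\bigr)$, and the kernel of the canonical surjection onto an image agrees with $\ker[M(\lambda)\to\coker f(\infty)]$.

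For (d), I would write $\coim f=\coker(\ker f\to M')$, compute the pointwise cokernel using (a) to obtain the coimage $\coim f(\lambda)\simeq\im f(\lambda)$ in the abelian category $\shc$, and then apply $\kappa$: the result at $\lambda$ is $\im[\im f(\lambda)\to\im f(\infty)]$. The key small lemma here is that $\im f(\lambda)\to\im f(\infty)$ is a monomorphism --- it sits over the monomorphism $M(\lambda)\into M(\infty)$ coming from $M\in\Fil_\mon(\shc)$ --- so this image is just $\im f(\lambda)$. With (a)--(d) in hand, parts (i) and (ii) are essentially bookkeeping. For (ii) I would use the characterization that the sequence is strictly exact iff the canonical map $\coim f\to\ker f'$ is an isomorphism; since $\iota$ is fully faithful an isomorphism in $\Fil_\mon(\shc)$ is detected pointwise, and by (d) and (a) this map is $\im f(\lambda)\to\ker f'(\lambda)$. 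For (i), I would identify the pullback $M(\lambda)\times_{M(\infty)}\im f(\infty)$ with $\ker[M(\lambda)\to\coker f(\infty)]\simeq(\im f)(\lambda)$ via (c), note that $(\coim f)(\lambda)\simeq\im f(\lambda)$ via (d), and observe that strictness --- the pointwise isomorphism $\coim f\to\im f$ --- is exactly the assertion that the comparison $\im f(\lambda)\to M(\lambda)\times_{M(\infty)}\im f(\infty)$ is an isomorphism, i.e.\ that the square is Cartesian.

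The main obstacle I anticipate is not any single object-level computation but the careful tracking of the canonical morphisms. I must check that the map $M(\lambda)\to(\coker f)(\lambda)$ produced by the $\kappa$-formula really is the projection onto the image, so that the kernel identification in (c) is legitimate, and --- most delicately --- that the comparison morphism $\coim f\to\im f$ of (i) is carried, at each $\lambda$, precisely to the universal map into the fiber product $M(\lambda)\times_{M(\infty)}\im f(\infty)$. These are naturality checks threading through the adjunction $(\kappa,\iota)$ and the exactness of $\sindlim$; pinning down the maps is where the real care lies, since once they are identified the underlying isomorphisms are immediate.
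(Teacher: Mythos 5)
Your proposal is correct and follows exactly the route the paper intends: the paper states this corollary without a written proof, as an immediate consequence of Proposition~\ref{prp:iota} (kernels are computed pointwise via $\iota$) and Proposition~\ref{prp:kappa} (cokernels are $\kappa$ applied to the pointwise cokernel), which is precisely the reduction you carry out, together with the standard identities $\im f=\ker(M\to\coker f)$ and $\coim f=\coker(\ker f\to M')$ and the exactness of filtrant colimits. Your supporting observations --- that $\im f(\lambda)\to\im f(\infty)$ is a monomorphism in (d), and that $\ker[M(\lambda)\to\coker f(\infty)]$ is the fiber product $M(\lambda)\times_{M(\infty)}\im f(\infty)$ in (i) --- are the right ones and close the argument.
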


\begin{example}\label{exa:coimnotim}
Let $\mon=\N$, $\shc=\md[\C]$ and denote by $\C[X]^{\leq n}$ the space of polynomials in one variable $X$ over $\C$ of degree $\leq n$. 
Consider the two objects $M'$ and $M$ of  $\Fil_\mon\shc$:
\eqn
&&M'\cl n\mapsto \C[X]^{\leq n},\\
&&M\cl n\mapsto \C[X]^{\leq n+1}.
\eneqn
Denote by $f\cl M'\to M$ the natural morphism.
Then $M'(\infty)\isoto M(\infty)\simeq \C[X]$ and $f(\infty)$ is an isomorphism. Therefore,
$(\im f)(n)\simeq M(n)$ and $(\coim f)(n)\simeq \im (f(n))\simeq M'(n)$. 
\end{example}

\begin{proposition}\label{prp:filteredobjectsarekappaacyclic}
Let $0\to M' \to[f] M \to[f'] M'' \to 0$
be an exact sequence in $\Fct(\mon,\shc)$. Assume that $M''$ belongs to $\Fil_\mon(\shc)$. Then the sequence
\[
	0\to \kappa(M') \to[\kappa(f)] \kappa(M) \to[\kappa(f')] \kappa(M'') \to 0
\]
is strictly exact in $\Fil_\mon(\shc)$ \lp\emph{i.\ e.} $\kappa(f)$ is a kernel of $\kappa(f')$ and $\kappa(f')$ is a cokernel of $\kappa(f)$\rp.
\end{proposition}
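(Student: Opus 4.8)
The plan is to establish separately the two assertions in the parentheses: that $\kappa(f')$ is a cokernel of $\kappa(f)$, and that $\kappa(f)$ is a kernel of $\kappa(f')$; together these are exactly the strict exactness of the short sequence. As a preliminary step I would apply $\sindlim$ to the given sequence. Since filtrant colimits in $\shc$ are exact by~\eqref{hyp:hypsection3}, this produces a short exact sequence $0\to M'(\infty)\to[f(\infty)]M(\infty)\to[f'(\infty)]M''(\infty)\to0$ in $\shc$. In particular $f(\infty)$ is a monomorphism and $\ker f'(\infty)=\im f(\infty)=M'(\infty)$, and I will freely identify $M'(\infty)$ with this subobject of $M(\infty)$.

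The cokernel assertion is the easy half and does not use the hypothesis $M''\in\Fil_\mon(\shc)$. By Proposition~\ref{prp:kappa} the functor $\kappa$ is left adjoint to $\iota$, hence preserves all colimits, and in particular cokernels. The exactness of the original sequence exhibits $f'$ as a cokernel of $f$ in $\Fct(\mon,\shc)$, so applying $\kappa$ exhibits $\kappa(f')$ as a cokernel of $\kappa(f)$ in $\Fil_\mon(\shc)$.

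For the kernel assertion I would argue objectwise. By Proposition~\ref{prp:iota} the inclusion $\iota$ commutes with kernels, so kernels in $\Fil_\mon(\shc)$ are computed as in $\Fct(\mon,\shc)$, i.e. at each $\lambda$; it therefore suffices to check, for every $\lambda\in\mon$, that $\kappa(f)(\lambda)$ is a kernel of $\kappa(f')(\lambda)$ in $\shc$. Here I regard the relevant objects as subobjects of $M(\infty)$: write $A_\lambda=\im j_M(\lambda)=\kappa(M)(\lambda)$, and use the monomorphism $f(\infty)$ to identify $\kappa(M')(\lambda)=\im j_{M'}(\lambda)$ with $\im[M'(\lambda)\to M(\infty)]\subseteq A_\lambda$. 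By Definition~\ref{def:kappa} the morphism $\kappa(f')(\lambda)$ is induced by $f'(\infty)$, so its kernel is $A_\lambda\cap\ker f'(\infty)=A_\lambda\cap M'(\infty)$. Thus the whole assertion reduces to the identity of subobjects $A_\lambda\cap M'(\infty)=\im[M'(\lambda)\to M(\infty)]$ in $M(\infty)$.

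This last identity is the crux, and it is precisely where the hypothesis $M''\in\Fil_\mon(\shc)$ is indispensable — I expect it to be the only real obstacle. To prove it I would realize $A_\lambda\cap M'(\infty)$ as the image in $M(\infty)$ of the pullback of $j_M(\lambda)$ along $M'(\infty)\into M(\infty)$, that is, of $\ker[f'(\infty)\circ j_M(\lambda)]$. By naturality of the morphisms $j$ one has $f'(\infty)\circ j_M(\lambda)=j_{M''}(\lambda)\circ f'(\lambda)$; and because $M''\in\Fil_\mon(\shc)$ the morphism $j_{M''}(\lambda)$ is a monomorphism, so this kernel equals $\ker f'(\lambda)=\im f(\lambda)\cong M'(\lambda)$ (using that $f(\lambda)$ is a monomorphism). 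Its image in $M(\infty)$ is exactly $\im[M'(\lambda)\to M(\infty)]$, giving the desired equality. The inclusion $\im[M'(\lambda)\to M(\infty)]\subseteq A_\lambda\cap M'(\infty)$ is formal, whereas the reverse inclusion rests entirely on the monomorphicity of $j_{M''}(\lambda)$ and would fail without the assumption on $M''$. Since under these identifications $\kappa(f)(\lambda)$ is the inclusion $\im[M'(\lambda)\to M(\infty)]\into A_\lambda$, this shows $\kappa(f)(\lambda)$ is a kernel of $\kappa(f')(\lambda)$; collecting these compatible identifications over all $\lambda$ shows $\kappa(f)$ is a kernel of $\kappa(f')$, completing the proof.
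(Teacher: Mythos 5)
Your proof is correct, and it reaches the conclusion by a genuinely different route than the paper. The paper works entirely objectwise from the start: it first observes that the monomorphy of $j_{M''}(\lambda)$ forces $\ker j_{M'}(\lambda)\isoto\ker j_{M}(\lambda)$, then feeds this into a $3\times3$ diagram whose columns are $0\to\ker j_N(\lambda)\to N(\lambda)\to\kappa(N)(\lambda)\to 0$ and whose top two rows are exact, so that the nine lemma yields the exactness of $0\to\kappa(M')(\lambda)\to\kappa(M)(\lambda)\to\kappa(M'')(\lambda)\to 0$ for every $\lambda$, and Corollary~\ref{cor:strictexactness} converts this objectwise exactness into strict exactness in $\Fil_\mon(\shc)$ in one stroke. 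You instead split the claim into its two halves: the cokernel half you dispatch by pure abstract nonsense ($\kappa$ is a left adjoint by Proposition~\ref{prp:kappa}, hence preserves cokernels), correctly noting that this half does not need $M''\in\Fil_\mon(\shc)$; the kernel half you reduce, via Proposition~\ref{prp:iota}, to the identity of subobjects $\im j_M(\lambda)\cap M'(\infty)=\im[M'(\lambda)\to M(\infty)]$ of $M(\infty)$, which you prove by pulling back along $M'(\infty)\into M(\infty)$ and using $f'(\infty)\comp j_M(\lambda)=j_{M''}(\lambda)\comp f'(\lambda)$ together with the monomorphy of $j_{M''}(\lambda)$. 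Both arguments thus hinge on exactly the same use of the hypothesis on $M''$, but package it differently: the paper's version is shorter and gets both halves simultaneously from a single diagram chase, while yours isolates more transparently \emph{where} the hypothesis is indispensable (only in the kernel half) and makes the geometric content --- an intersection of subobjects of $M(\infty)$ --- explicit. One small economy you could borrow from the paper: rather than identifying kernels of restrictions inside $M(\infty)$ by hand, the relation $\ker j_{M'}(\lambda)\simeq\ker j_M(\lambda)$ plus the nine lemma packages the same subobject computation into standard diagram lemmas.
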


\begin{proof}
We know that the diagram
\[
\xymatrix{
0 \ar[r] & M'(\lambda) \ar[r]^{f(\lambda)}\ar[d]_-{j_{M'}(\lambda)} & M(\lambda) \ar[r]^{f'(\lambda)}\ar[d]_-{j_M(\lambda)} & M''(\lambda) \ar[r]\ar[d]_-{j_{M''}(\lambda)} & 0 \\
0 \ar[r] & M'(\infty) \ar[r]^{f(\infty)} & M(\infty) \ar[r]^{f'(\infty)} & M''(\infty) \ar[r] & 0 \\
}
\]
is commutative and has exact rows. Since the last vertical arrow is a monomorphism it follows that we have a canonical isomorphism
\[
\ker j_{M'}(\lambda) \simeq \ker j_{M}(\lambda).
\]
Therefore, in the commutative diagram
\[
\xymatrix{
         & 0\ar[d]                           & 0\ar[d]                         & 0\ar[d]                           &   \\
0 \ar[r] & \ker j_{M'}(\lambda) \ar[r]\ar[d] & \ker j_{M}(\lambda)\ar[r]\ar[d] & 0\ar[r]\ar[d]                     & 0 \\
0 \ar[r] & M'(\lambda) \ar[r]\ar[d]          & M(\lambda) \ar[r]\ar[d]         & M''(\lambda)\ar[r]\ar[d]          & 0 \\
0 \ar[r] & \kappa(M')(\lambda) \ar[r]\ar[d]  & \kappa(M)(\lambda) \ar[r]\ar[d] & \kappa(M'')(\lambda) \ar[r]\ar[d] & 0 \\
         & 0                                 & 0                               & 0                                 &
}
\]
the columns and the  two  lines in the top are exact. Therefore the last row is also exact and the conclusion follows from 
Corollary~\ref{cor:strictexactness}.
\end{proof}

\begin{theorem}\label{th:quasiab}
Assume~\ref{hyp:hypsection3}.
The category $\Fil_\mon(\shc)$ is quasi-abelian.
\end{theorem}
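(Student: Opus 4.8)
The plan is to verify the two axioms of Definition~\ref{def:qabcat} by transporting them, through the reflective adjunction $(\kappa,\iota)$, to the corresponding elementary statements in the abelian category $\Fct(\mon,\shc)$. The additive structure together with kernels and cokernels is already supplied by Propositions~\ref{prp:iota} and~\ref{prp:kappa}, so the only points to prove are that strict epimorphisms are stable by base change and that strict monomorphisms are stable by cobase change.

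First I would record concrete descriptions of the two classes of strict morphisms in terms of $\iota$. By Corollary~\ref{cor:strictexactness}(d) one has $(\coim f)(\lambda)\isoto\im f(\lambda)$, and the canonical map $\coim f\to M$ is, levelwise, the inclusion $\im f(\lambda)\into M(\lambda)$; hence $f$ is a strict epimorphism in $\Fil_\mon(\shc)$ if and only if each $f(\lambda)$ is an epimorphism in $\shc$, that is, if and only if $\iota f$ is an epimorphism in $\Fct(\mon,\shc)$ (epimorphisms there being computed levelwise). On the monomorphism side, since $\iota$ commutes with kernels it both preserves and reflects monomorphisms, and by Corollary~\ref{cor:strictexactness}(c) the morphism $f$ is a strict monomorphism exactly when $\iota f$ is a monomorphism with $M'(\lambda)\isoto\ker[M(\lambda)\to\coker f(\infty)]$, i.e.\ the filtration of $M'$ is induced from that of $M$. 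The key observation I would extract here is that the cokernel $Z\eqdot\coker(\iota f)$, computed in $\Fct(\mon,\shc)$, then satisfies $Z(\lambda)=M(\lambda)/M'(\lambda)$ and $Z(\infty)=M(\infty)/M'(\infty)$ (using exactness of filtrant colimits), and that strictness forces $\ker[Z(\lambda)\to Z(\infty)]=0$; thus $Z$ already belongs to $\Fil_\mon(\shc)$.

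With these descriptions, axiom (i) is immediate. Being a right adjoint, $\iota$ preserves kernels and biproducts, hence all fibre products; so for a strict epimorphism $f\cl X\to Y$ and any $Y'\to Y$, the base change $X'=X\times_Y Y'$ formed in $\Fil_\mon(\shc)$ satisfies $\iota X'\isoto\iota X\times_{\iota Y}\iota Y'$. As the pullback of an epimorphism is an epimorphism in the abelian category $\Fct(\mon,\shc)$, the morphism $\iota(X'\to Y')$ is an epimorphism, whence $X'\to Y'$ is a strict epimorphism by the criterion above.

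For axiom (ii) I would argue through $\kappa$, since formal duality is unavailable ($\Fil_\mon(\shc)$ is not self-dual). Given a strict monomorphism $f\cl X\to Y$ and any $g\cl X\to X'$, the cobase change of the monomorphism $\iota f$ along $\iota g$ in $\Fct(\mon,\shc)$ produces a short exact sequence $0\to\iota X'\to P\to Z\to 0$, where $P$ is the pushout and $Z=\coker(\iota f)\in\Fil_\mon(\shc)$ by the previous paragraph. Since $\kappa$ is a left adjoint, the pushout $Y'$ of $f$ along $g$ in $\Fil_\mon(\shc)$ is $\kappa P$; applying Proposition~\ref{prp:filteredobjectsarekappaacyclic} to this sequence (whose right-hand term lies in $\Fil_\mon(\shc)$) yields a strictly exact sequence $0\to\kappa\iota X'\to\kappa P\to\kappa Z\to 0$, that is $0\to X'\to Y'\to Z\to 0$, so $X'\to Y'$ is a strict monomorphism. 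The main obstacle is exactly this monomorphism side: because the category is not self-dual, (ii) does not follow from (i), and everything hinges on the lemma that the $\Fct(\mon,\shc)$-cokernel of a strict monomorphism already lies in $\Fil_\mon(\shc)$ — the one place where exactness of filtrant colimits is genuinely needed — which is what makes Proposition~\ref{prp:filteredobjectsarekappaacyclic} applicable to the pushout sequence.
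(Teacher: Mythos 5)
Your proof is correct and takes essentially the same route as the paper: axiom (i) is handled by noting that $\iota$ preserves pullbacks and that strict epimorphisms are exactly the levelwise epimorphisms, and axiom (ii) by realizing the pushout in $\Fil_\mon(\shc)$ as $\kappa$ applied to the pushout in $\Fct(\mon,\shc)$ and then applying Proposition~\ref{prp:filteredobjectsarekappaacyclic} to the exact sequence ending in $\coker(\iota f)$, which lies in $\Fil_\mon(\shc)$ precisely because $f$ is a strict monomorphism. This is exactly the paper's argument, so there is nothing to add.
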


\begin{proof}
Consider a Cartesian square in $\Fil_\mon(\shc)$
\[
\xymatrix{
N'\ar[r]^{g}\ar[d]^{u'} & N \ar[d]^{u}\\
M'\ar[r]^{f}            & M            \\
}
\]
 and assume that $f$ is a strict epimorphism in $\Fil_\mon(\shc)$. It follows from Proposition~\ref{prp:iota} and Corollary~\ref{cor:strictexactness}~(ii) that this square is also Cartesian in $\Fct(\mon,\shc)$ and that $f$ is an epimorphism in this category. Hence $g$ is an epimorphism in $\Fct(\mon,\shc)$ and Corollary~\ref{cor:strictexactness}~(ii) shows that $g$ is a strict epimorphism in $\Fil_\mon(\shc)$.

Consider now a co-Cartesian square in $\Fil_\mon(\shc)$
\[
\xymatrix{
M'\ar[r]^{f}\ar[d]^{u'} & M \ar[d]^{u}\\
N'\ar[r]^{g}            & N           
}
\]
and assume that $f$ is a strict monomorphism in $\Fil_\mon(\shc)$. We know from Proposition~\ref{prp:iota} and Proposition~\ref{prp:kappa} that this square is the image by $\kappa$ of the co-Cartesian square of $\Fct(\mon,\shc)$ with solid arrow
\[
\xymatrix{
M'\ar[r]^{f}\ar[d]^{u'} & M \ar[d]^{v}\ar@{.>}[r]^-q&C\\
N'\ar[r]_-{h}            & P     \ar@{.>}[ru]_-{q'}&
}
\]
Denote by 
\[
	q:M \to C
\]
the canonical morphism from $M$ to the cokernel of $f$ in $\Fil_\mon(\shc)$. Since $f$ is a strict monomorphism of $\Fil_\mon(\shc)$, $C$ is also the cokernel of $f$ in $\Fct(\mon,\shc)$ and there is a unique morphism $q':P\to C$ such that 
$q'\comp v=q$ and $q'\comp h=0$. Moreover, one checks easily that this morphism $q'$ is a cokernel of $h$ in $\Fct(\mon,\shc)$. It follows that the sequence
\[
	0 \to N' \to[h] P \to[q'] C \to 0
\]
is exact in $\Fct(\mon,\shc)$. Applying $\kappa$ to this sequence and using Proposition~\ref{prp:filteredobjectsarekappaacyclic}, we get a strictly exact sequence of the form:
\[
	0 \to N' \to[g] N \to C \to 0.
\]
This shows in particular that $g$ is a strict monomorphism in $\Fil_\mon(\shc)$ and the conclusion follows.
\end{proof}

\subsubsection*{The Rees functor}

From now on we assume that $\mon$ is a category associated with a preordered set. Thanks to Remark~\ref{rem:poset}~(i),
this assumption is not really restrictive.

In the sequel, given a direct sum $\bigoplus_{i\in I}X_i$ we denote by $\sigma_i\cl X_i\to \bigoplus_{i\in I}X_i$ the canonical morphism.

\begin{definition}\label{def:rm}
For $M\in\Fct(\mon,\shc)$ we define $\Rees(M)\in\Fct(\mon,\shc)$ as follows.
For $\lambda_0\in\mon$ and for $s\cl\lambda_0\to\lambda_1$, we set
\eqn
&&\Rees(M)(\lambda_0) =\bigoplus_{s_0\cl\lambda'_0\to\lambda_0} M(\lambda'_0),
\eneqn
and define
\eqn
&&\Rees(M)(s)\cl\Rees(M)(\lambda_0)\to\Rees(M)(\lambda_1) 
\eneqn
as the only morphism such that $\Rees(M)(s)\comp\sigma_{s_0} =\sigma_{s\comp s_0}$
for any $s_0\cl\lambda'_0\to\lambda_0$.
\end{definition}

\begin{proposition}\label{prp:rfunctor}
Let $M\cl\mon\to\shc$ be a functor and let $s\cl\lambda_0\to\lambda_1$ be a morphism of $\mon$. Then
$\Rees(M)(s)\cl\Rees(M)(\lambda_0)\to\Rees(M)(\lambda_1)$
is a split monomorphism. In particular $\Rees(M)$ is an object of $\Fil_\mon\shc$.
\end{proposition}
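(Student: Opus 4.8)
The plan is to unwind Definition~\ref{def:rm} and recognize $\Rees(M)(s)$ as the canonical inclusion of a sub-family of direct summands into a direct sum, which is automatically split. Since $\mon$ is now a preordered set, the index set for $\Rees(M)(\lambda_0)=\bigoplus_{s_0\cl\lambda'_0\to\lambda_0}M(\lambda'_0)$ is the set of morphisms into $\lambda_0$, with a single summand $M(\lambda'_0)$ for each $\lambda'_0$ admitting a morphism to $\lambda_0$; similarly $\Rees(M)(\lambda_1)$ is indexed by the morphisms into $\lambda_1$. By the defining relation $\Rees(M)(s)\comp\sigma_{s_0}=\sigma_{s\comp s_0}$, and since $s\comp s_0\cl\lambda'_0\to\lambda_1$ has the same domain $\lambda'_0$ as $s_0$, the morphism $\Rees(M)(s)$ carries the summand $M(\lambda'_0)$ indexed by $s_0$ by the identity onto the summand $M(\lambda'_0)$ indexed by $s\comp s_0$. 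Thus $\Rees(M)(s)$ is entirely governed by the map on index sets $\phi\cl s_0\mapsto s\comp s_0$.

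The key step is to check that $\phi$ is injective. If $s\comp s_0=s\comp s'_0$ for morphisms $s_0\cl\lambda'_0\to\lambda_0$ and $s'_0\cl\lambda''_0\to\lambda_0$, then comparing domains forces $\lambda'_0=\lambda''_0$, and because $\mon$ is a preordered set there is at most one morphism $\lambda'_0\to\lambda_0$, whence $s_0=s'_0$. This is the only place where the standing assumption that $\mon$ is associated with a preordered set is used, and it is the crux of the argument: without it, distinct $s_0$ could be merged by post-composition with $s$ and the map would fail to be monic.

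Finally I would produce an explicit retraction $r\cl\Rees(M)(\lambda_1)\to\Rees(M)(\lambda_0)$ by sending the summand indexed by a morphism $s_1\cl\nu\to\lambda_1$ by the identity onto the summand indexed by the unique $s_0$ with $s\comp s_0=s_1$ when such an $s_0$ exists, and to $0$ otherwise; the injectivity of $\phi$ established above guarantees that this $s_0$ is unique, so $r$ is well defined. Then $r\comp\Rees(M)(s)=\id$ summand by summand, so $\Rees(M)(s)$ is a split monomorphism. In particular every transition morphism of $\Rees(M)$ is a monomorphism, so $\Rees(M)$ belongs to $\Fil_\mon\shc$ by the very definition of this subcategory. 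I do not expect any serious obstacle: the argument is formal once the combinatorics of the index sets is laid out, and the only care required is the bookkeeping of summands together with the use of the preorder to ensure both the injectivity of $\phi$ and the well-definedness of $r$.
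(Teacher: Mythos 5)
Your proof is correct and is essentially the paper's own argument: the retraction $r$ you construct is exactly the morphism $\rho$ in the paper, defined summand by summand via $\rho\comp\sigma_{s_1}=\sigma_{s_0}$ when $s_1=s\comp s_0$ and $0$ otherwise, with the same appeal to the preorder assumption to guarantee uniqueness of $s_0$ and hence well-definedness. The only difference is presentational: you make the injectivity of the index map $s_0\mapsto s\comp s_0$ explicit before defining the retraction, which the paper leaves implicit in its parenthetical remark.
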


\begin{proof}
Let us define $\rho\cl\Rees(M)(\lambda_1)\to\Rees(M)(\lambda_0)$
as the unique morphism such that
\[
	\rho\comp\sigma_{s_1} =\left\{\begin{array}{cl}
		\sigma_{s_0}&\text{ if }s_1=s\comp s_0\text{ for some } s_0\cl\lambda'_1\to\lambda_0\\
		0           &\text{ otherwise.}
	\end{array}
	\right.
\]
This definition makes sense since if $s_1=s\comp s_0$ for some $s_0\cl\lambda'_1\to\lambda_0$ then such an $s_0$ is 
unique (recall that $\mon$ is a poset).
Since 
\[
	\rho\comp\Rees(M)(s)\comp\sigma_{s_0}=\rho\comp\sigma_{s\comp s_0}=\sigma_{s_0}
\]
for any $s_0\cl\lambda'_0\to\lambda_0$ in $\mon$, the conclusion follows.
\end{proof}

\begin{remark}
The preceding construction gives rise to a functor, that we call the Rees functor, 
\eqn
&&\Rees\cl\Fct(\mon,\shc)\to\Fil_\mon\shc.
\eneqn
This functor sends exact sequences in $\Fct(\mon,\shc)$ to strict exact sequences in $\Fil_\mon\shc$.
\end{remark}

\begin{definition}
For any  $M\in\Fct(\mon,\shc)$ we define the morphism $\epsilon_M\cl\Rees(M)\to M$
by letting
\eq\label{eq:morepsilonM}
&&	\epsilon_M(\lambda_0)\cl\Rees(M)(\lambda_0)\to M(\lambda_0)
\eneq
be the unique morphism such that
$\epsilon_M(\lambda_0)\comp\sigma_{s_0} = M(s_0)$
for any $s_0\cl\lambda'_0\to\lambda_0$ in $\mon$.
\end{definition}

\begin{proposition}\label{prp:repsilon}
For any $M\in\Fct(\mon,\shc)$ and $\lambda_0\in\mon$, the morphism~\eqref{eq:morepsilonM}
is a split epimorphism of $\shc$. In particular, the morphism
$\epsilon_M\cl\Rees(M)\to M$
is an epimorphism in $\Fct(\mon,\shc)$.
\end{proposition}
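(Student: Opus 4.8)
The plan is to produce an explicit splitting of $\epsilon_M(\lambda_0)$ coming from the identity morphism of $\lambda_0$, and then to deduce the global statement from the objectwise one.

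First I would note that, since $\mon$ is the category associated with a preordered set, the identity $\id_{\lambda_0}\cl\lambda_0\to\lambda_0$ is one of the indexing morphisms $s_0\cl\lambda'_0\to\lambda_0$ occurring in the direct sum $\Rees(M)(\lambda_0)=\bigoplus_{s_0\cl\lambda'_0\to\lambda_0}M(\lambda'_0)$, and the corresponding summand is precisely $M(\lambda_0)$. Let $\sigma_{\id_{\lambda_0}}\cl M(\lambda_0)\to\Rees(M)(\lambda_0)$ denote the associated canonical injection.

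Next I would invoke the defining property of $\epsilon_M(\lambda_0)$, namely that $\epsilon_M(\lambda_0)\comp\sigma_{s_0}=M(s_0)$ for every $s_0\cl\lambda'_0\to\lambda_0$. Applying this relation to $s_0=\id_{\lambda_0}$ yields $\epsilon_M(\lambda_0)\comp\sigma_{\id_{\lambda_0}}=M(\id_{\lambda_0})=\id_{M(\lambda_0)}$. Thus $\sigma_{\id_{\lambda_0}}$ is a section of $\epsilon_M(\lambda_0)$, which shows that $\epsilon_M(\lambda_0)$ is a split epimorphism in $\shc$.

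Finally, for the \emph{in particular} assertion, I would use that $\shc$ is abelian, so that cokernels in $\Fct(\mon,\shc)$ are computed objectwise and a morphism there is an epimorphism exactly when it is an epimorphism at each $\lambda_0\in\mon$. Since every $\epsilon_M(\lambda_0)$ is a split epimorphism, hence an epimorphism, it follows that $\epsilon_M\cl\Rees(M)\to M$ is an epimorphism in $\Fct(\mon,\shc)$. There is essentially no obstacle here: the argument is formal, and the only point requiring mild care is that the presence of $\id_{\lambda_0}$ among the indexing morphisms provides the ``diagonal'' summand $M(\lambda_0)$ that furnishes the section directly.
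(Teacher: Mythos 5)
Your proof is correct and follows exactly the paper's own argument: the splitting is given by the canonical injection $\sigma_{\id_{\lambda_0}}$ of the summand indexed by $\id_{\lambda_0}$, and the relation $\epsilon_M(\lambda_0)\comp\sigma_{\id_{\lambda_0}}=M(\id_{\lambda_0})=\id_{M(\lambda_0)}$ gives the section. The concluding observation that epimorphisms in $\Fct(\mon,\shc)$ are detected objectwise is the implicit final step of the paper's proof as well.
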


\begin{proof}
This follows directly from
\[
	\epsilon_M(\lambda_0)\comp\sigma_{\id_{\lambda_0}} = M(\id_{\lambda_0}) =\id_{M(\lambda_0)},\, \lambda_0\in\mon.
\]
\end{proof}

\begin{corollary}
The category $\Fil_\mon\shc$ is a $\kappa$-projective subcategory of the category $\Fct(\mon,\shc)$. In particular the functor
\[
	\kappa\cl\Fct(\mon,\shc)\to\Fil_\mon\shc
\]
is explicitly left derivable. Moreover, it has finite cohomological dimension.
\end{corollary}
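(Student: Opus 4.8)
The plan is to verify directly the three conditions (a), (b), (c) of Definition~\ref{def:Fprojectcat} for the functor $F=\kappa\cl\Fct(\mon,\shc)\to\Fil_\mon\shc$ and the full additive subcategory $\shp=\Fil_\mon\shc$. Since $\Fct(\mon,\shc)$ is abelian, every morphism there is strict, so the strict exact sequences appearing in the source of Definition~\ref{def:Fprojectcat} are simply ordinary exact sequences, which streamlines the bookkeeping.

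For condition (a), given $M\in\Fct(\mon,\shc)$ I would take $Y=\Rees(M)$: by Proposition~\ref{prp:rfunctor} this object lies in $\Fil_\mon\shc$, and by Proposition~\ref{prp:repsilon} the morphism $\epsilon_M\cl\Rees(M)\to M$ is an epimorphism, hence a strict epimorphism in the abelian category $\Fct(\mon,\shc)$. For condition (b), if $0\to M'\to M\to M''\to 0$ is exact with $M\in\Fil_\mon\shc$, then $M'$ is a subobject of $M$, so Proposition~\ref{prp:iota} (stability of $\Fil_\mon\shc$ under subobjects) yields $M'\in\Fil_\mon\shc$; note that only the hypothesis $M\in\shp$ is actually used. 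Condition (c) is precisely the content of Proposition~\ref{prp:filteredobjectsarekappaacyclic} (indeed a slightly stronger statement, since that proposition requires only $M''\in\Fil_\mon\shc$). This establishes that $\Fil_\mon\shc$ is $\kappa$-projective, and explicit left derivability of $\kappa$ then follows from the general discussion after Definition~\ref{def:Fprojectcat}.

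For the finiteness of the cohomological dimension, the key point is that a resolution of length one already suffices. Given $M\in\Fct(\mon,\shc)$, I would form the kernel $K=\ker\epsilon_M$ of the epimorphism $\epsilon_M\cl\Rees(M)\to M$, obtaining a short exact sequence $0\to K\to\Rees(M)\to M\to 0$ in $\Fct(\mon,\shc)$. Since $K$ is a subobject of $\Rees(M)\in\Fil_\mon\shc$, Proposition~\ref{prp:iota} again forces $K\in\Fil_\mon\shc$. Thus the two-term complex $[\,K\to\Rees(M)\,]$, placed in degrees $-1$ and $0$, is a $\kappa$-projective resolution of $M$ of length one in $\rK^-(\Fil_\mon\shc)$, so $L\kappa(M)$ is computed by $[\,\kappa(K)\to\kappa(\Rees(M))\,]$ and is concentrated in degrees $-1,0$. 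Hence $\kappa$ has cohomological dimension at most one.

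The computations here are all routine, being immediate consequences of the earlier propositions, so there is no substantial obstacle. The one point worth flagging is that the same stability-under-subobjects property (Proposition~\ref{prp:iota}) that gives condition (b) is exactly what keeps $\ker\epsilon_M$ inside $\Fil_\mon\shc$; it is this observation that collapses the cohomological dimension all the way to one, rather than yielding mere finiteness through a less precise argument.
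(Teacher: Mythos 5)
Your proof is correct and follows exactly the same route as the paper: conditions (a), (b), (c) are checked via Propositions~\ref{prp:repsilon}, \ref{prp:iota} and \ref{prp:filteredobjectsarekappaacyclic} respectively, and the finite cohomological dimension comes from the two-term resolution $0\to\ker\epsilon_M\to\Rees(M)\to M\to 0$, with stability under subobjects keeping the kernel in $\Fil_\mon\shc$. Your write-up merely spells out details the paper leaves implicit.
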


\begin{proof}
The properties (a), (b) and (c) of Definition~\ref{def:Fprojectcat} follow respectively from Proposition~\ref{prp:repsilon}, Proposition~\ref{prp:iota} and Proposition~\ref{prp:filteredobjectsarekappaacyclic}. Hence the category $\Fil_\mon\shc$ is 
$\kappa$-projective. Since it is also stable by subobjects it follows that any object of $\Fct(\mon,\shc)$ has a two terms resolution by objects of $\Fil_\mon\shc$ and the conclusion follows.
\end{proof}

\begin{theorem}\label{th:main1}
Assume~\eqref{hyp:hypsection3} and assume that $\mon$ is a preordered set.
The functor $\iota\cl\Fil_\mon\shc\to\Fct(\mon,\shc)$
is strictly exact and induces an equivalence of categories  for $*=\ub,\rb,+,-$
\[
 \iota\cl \RD^*(\Fil_\mon\shc)\to \RD^*(\Fct(\mon,\shc))
\]
whose quasi-inverse is given by
\[
 L\kappa\cl \RD^*(\Fct(\mon,\shc))\to \RD^*(\Fil_\mon\shc).
\]
Moreover, $\iota$ induces an equivalence of abelian categories
\[
	\LH(\Fil_\mon\shc)\simeq\Fct(\mon,\shc).
\]
\end{theorem}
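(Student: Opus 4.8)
The strategy is to realize $\iota$ and $L\kappa$ as an adjoint pair of triangulated functors whose unit and counit are isomorphisms. I begin at the underived level. By Corollary~\ref{cor:strictexactness}~(ii), a sequence $M'\to M\to M''$ in $\Fil_\mon\shc$ with vanishing composite is strictly exact if and only if $\im f(\lambda)\to\ker f'(\lambda)$ is an isomorphism in $\shc$ for every $\lambda\in\mon$, that is, if and only if $\iota(M')(\lambda)\to\iota(M)(\lambda)\to\iota(M'')(\lambda)$ is exact for every $\lambda$. As exactness in $\Fct(\mon,\shc)$ is computed objectwise, this shows at once that $\iota$ is strictly exact and, applied in each degree, that a complex $M^\bullet$ is strictly exact in degree $n$ if and only if $H^n(\iota M^\bullet)=0$. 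Hence $M^\bullet\in\she$ exactly when $\iota M^\bullet$ is exact, so $\iota$ preserves and reflects the null systems and descends to $\iota\cl\RD^*(\Fil_\mon\shc)\to\RD^*(\Fct(\mon,\shc))$ for every $*$.

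Since $\iota$ is strictly exact it requires no derivation, whereas $\kappa$ is explicitly left derivable of finite cohomological dimension (established just above), so $L\kappa$ is defined on $\RD^*$ for $*=\ub,\rb,+,-$ and, by the formalism of~\cite{Sn99}, $(L\kappa,\iota)$ is an adjoint pair of triangulated functors. To see the counit $L\kappa\comp\iota\to\id$ is an isomorphism I use that the objects of $\Fil_\mon\shc$ are $\kappa$-acyclic: for $M^\bullet\in\rK^-(\Fil_\mon\shc)$ the complex $\iota M^\bullet$ already has its terms in the $\kappa$-projective subcategory, so $L\kappa(\iota M^\bullet)\simeq\kappa(\iota M^\bullet)=(\kappa\comp\iota)(M^\bullet)\simeq M^\bullet$, and this identification is the counit since the latter is induced by the underived isomorphism $\kappa\comp\iota\simeq\id$ of Proposition~\ref{prp:kappa}. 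By the triangle identities a counit which is an isomorphism forces the right adjoint $\iota$ to be fully faithful.

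It remains to prove essential surjectivity. Here I invoke the Rees functor: Proposition~\ref{prp:repsilon} provides an epimorphism $\epsilon_M\cl\Rees(M)\to M$ with $\Rees(M)\in\Fil_\mon\shc$, and since $\Fil_\mon\shc$ is stable by subobjects (Proposition~\ref{prp:iota}) its kernel again lies in $\Fil_\mon\shc$; thus every $N\in\Fct(\mon,\shc)$ has a two-term resolution by objects of $\Fil_\mon\shc$, and every $N^\bullet$ admits a quasi-isomorphism $\iota P^\bullet\to N^\bullet$ with $P^\bullet\in\rK^*(\Fil_\mon\shc)$ (Proposition~\ref{prp:filteredobjectsarekappaacyclic} ensures these resolutions are $\kappa$-acyclic). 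Therefore $\iota$ is essentially surjective, hence an equivalence with quasi-inverse the adjoint $L\kappa$; finite cohomological dimension guarantees compatibility with all of the boundedness conditions $*=\ub,\rb,+,-$.

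For the statement on hearts I check that $\iota$ is $t$-exact from the left $t$-structure on $\RD(\Fil_\mon\shc)$ to the $t$-structure on $\RD(\Fct(\mon,\shc))$. This is immediate from the degreewise criterion above: $M^\bullet$ lies in $\RD^{\le n}(\Fil_\mon\shc)$ (resp.\ $\RD^{\ge n}$) precisely when $\iota M^\bullet$ has vanishing cohomology in degrees $>n$ (resp.\ $<n$). Restricting the equivalence to the hearts $\RD^{\le0}\cap\RD^{\ge0}$ then identifies $\LH(\Fil_\mon\shc)$ with $\Fct(\mon,\shc)$. The delicate point throughout is the unbounded derived adjunction and the verification that the counit is an isomorphism of functors, not merely objectwise; both rest on the $\kappa$-acyclicity of filtered objects and the finite cohomological dimension supplied by the Rees construction.
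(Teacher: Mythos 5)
Your argument is correct and is precisely the assembly of the preceding results that the paper intends: the paper states Theorem~\ref{th:main1} without a written proof, leaving it to follow from Corollary~\ref{cor:strictexactness} (which, as you note, identifies strict exactness in $\Fil_\mon\shc$ with objectwise exactness after applying $\iota$), the $\kappa$-projectivity of $\Fil_\mon\shc$ established via the Rees functor, the adjunction $\kappa\dashv\iota$ with $\kappa\comp\iota\simeq\id$, and the general derivation formalism of~\cite{Sn99}. Your write-up fills in exactly those steps, including the correct degreewise comparison of the left $t$-structure with the standard one for the statement on hearts, so there is nothing to object to.
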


\section{Filtered modules  in an abelian tensor category}

\subsubsection*{Abelian tensor categories}
In this subsection we recall a few facts about abelian tensor categories. References are made to~\cite[Ch.~5]{KS06} for details.

Let $\shc$ be an additive category. A biadditive tensor product on $\shc$ is  the data of  a functor
$\tens\cl\shc\times\shc\to\shc$
additive with respect to each argument together with functorial associativity isomorphisms
\[
	\alpha_{X,Y,Z}\cl(X\tens Y)\tens Z\isoto X\tens(Y\tens Z)
\]
satisfying the natural compatibility conditions. 

From now on, we assume that $\shc$ is endowed with such a tensor product. A ring object of $\shc$ 
(or, equivalently, ``a ring in $\shc$'') is then an object $A$ of $\shc$ endowed with an associative multiplication 
$\mu_A\cl A\tens A\to A$. 

Let $A$ be such a ring object. Then, an $A$-module of $\shc$ is the data of an object $M$ of $\shc$ together with an associative action $\nu_M\cl A\tens M\to M$.

The $A$-modules of $\shc$ form a category denoted $\Mod(A)$. A morphism $f\cl M\to N$ in this category is simply a morphism of $\shc$ which is $A$-linear, {\em i.\ e., } which is compatible with the actions of $A$ on $M$ and $N$. Most of the properties of $\Mod(A)$ can be deduced from that of $\shc$ thanks to following result, whose proof is  left to the reader.

\begin{lemma}\label{le:forgetmod}
The category $\Mod(A)$ is an additive category and the forgetful functor $\For\cl\Mod(A)\to\shc$
is additive, faithful, conservative and reflects projective limits. 
This functor also reflects inductive limits which are preserved by $A\tens\scbul\cl\shc\to\shc$.
\end{lemma}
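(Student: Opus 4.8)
The plan is to exploit throughout that, for any two $A$-modules $M,N$, the set $\Hom[\Mod(A)](M,N)$ is exactly the subset of $\Hom[\shc](M,N)$ consisting of those $\shc$-morphisms $f$ which are $A$-linear, i.e.\ satisfy $f\comp\nu_M=\nu_N\comp(\id_A\tens f)$. I would first settle the additive structure. Biadditivity of $\tens$ gives $\id_A\tens(f+g)=\id_A\tens f+\id_A\tens g$, so the $A$-linear morphisms form a subgroup of $\Hom[\shc](M,N)$; thus $\Mod(A)$ is preadditive and $\For$, being the inclusion of these Hom-groups, is additive and faithful. The zero object of $\shc$ carries the zero action and is a zero object of $\Mod(A)$; for the biproduct, the canonical isomorphism $A\tens(M\oplus N)\isoto(A\tens M)\oplus(A\tens N)$ (again biadditivity) lets one equip $M\oplus N$ with the action $\nu_M\oplus\nu_N$, and the four structure morphisms are then $A$-linear, so $M\oplus N$ is a biproduct in $\Mod(A)$. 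Hence $\Mod(A)$ is additive and $\For$ preserves biproducts.

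For conservativity, suppose $f\cl M\to N$ is $A$-linear and $\For(f)$ is invertible in $\shc$ with inverse $g$. A short diagram chase — pre- and post-composing the relation $f\comp\nu_M=\nu_N\comp(\id_A\tens f)$ with $g$ and with $\id_A\tens g$, and using functoriality of $\tens$ in the second variable together with $\id_A\tens\id_N=\id_{A\tens N}$ — yields $g\comp\nu_N=\nu_M\comp(\id_A\tens g)$, so $g$ is $A$-linear and $f$ is an isomorphism in $\Mod(A)$.

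For projective limits, given an inverse system $(M_i)$ in $\Mod(A)$ whose image $(\For M_i)$ has a limit $L$ in $\shc$ with projections $p_i$, I would produce the action on $L$ by the universal property of $L$: the morphisms $\nu_{M_i}\comp(\id_A\tens p_i)\cl A\tens L\to M_i$ form a compatible cone (compatibility uses precisely that the transition morphisms are $A$-linear), hence factor through a unique $\nu_L\cl A\tens L\to L$ with $p_i\comp\nu_L=\nu_{M_i}\comp(\id_A\tens p_i)$. The module axioms for $\nu_L$, the $A$-linearity of each $p_i$, and the fact that $L$ so equipped is the limit in $\Mod(A)$ then all follow from the uniqueness clause in the universal property of $L$. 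Crucially, no hypothesis on $\tens$ is needed here, because one only maps \emph{into} $L$ from the fixed object $A\tens L$.

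For inductive limits the construction is formally dual, but now one must map \emph{out of} $A\tens C$, where $C=\indlim\For M_i$ with coprojections $\iota_i$: to produce $\nu_C\cl A\tens C\to C$ from the cocone $\iota_i\comp\nu_{M_i}\cl A\tens M_i\to C$ one needs $A\tens C$ to be the colimit of $(A\tens M_i)$, i.e.\ one needs $A\tens\scbul$ to preserve this inductive limit — which is exactly the hypothesis in the statement. Granting it, the same uniqueness arguments show that $C$ underlies the colimit in $\Mod(A)$. The main point to watch is thus this asymmetry: projective limits lift unconditionally, whereas inductive limits lift only when $A\tens\scbul$ preserves them; and the one genuinely non-formal verification in each case is the associativity (module) axiom for the induced action, which one checks by composing with the projections $p_i$ (resp.\ coprojections $\iota_i$) and invoking uniqueness in the relevant universal property.
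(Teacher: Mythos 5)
The paper gives no proof of this lemma (it is explicitly ``left to the reader''), so there is nothing to compare against; your argument is the standard one and is essentially correct. Identifying $\Hom[\Mod(A)](M,N)$ with the subgroup of $A$-linear morphisms of $\Hom[\shc](M,N)$, transporting the biproduct along $A\tens(M\oplus N)\simeq(A\tens M)\oplus(A\tens N)$, the diagram chase for conservativity, and the construction of the action on a (co)limit object by its universal property are all as they should be, and you locate the asymmetry between projective and inductive limits exactly where it lies.

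One step deserves tightening. In the inductive case you verify the associativity axiom for the induced action $\nu_C$ ``by composing with the coprojections and invoking uniqueness''; but this axiom is an equality of two morphisms out of $A\tens(A\tens C)$, and testing it against the maps from $A\tens(A\tens M_i)$ requires that family to be jointly epimorphic, i.e.\ it requires $A\tens\scbul$ to preserve not only the colimit $C$ of $(M_i)$ but also the colimit $A\tens C$ of the system $(A\tens M_i)$. This is slightly more than the hypothesis as stated, though it is harmless wherever the lemma is used in the paper, since Proposition~\ref{pro:forgetmod} and its sequels assume $A\tens\scbul$ commutes with all cokernels or with all small inductive limits. Note also that the literal statement ``reflects'' (a cocone in $\Mod(A)$ whose image is a colimit cocone preserved by $A\tens\scbul$ is already a colimit cocone) requires no construction at all: the action on the target is given, and one only has to check that the comparison morphism $h$ furnished by the universal property in $\shc$ is $A$-linear, by testing $h\comp\nu_N=\nu_{N'}\comp(\id_A\tens h)$ against the jointly epimorphic family $\id_A\tens\iota_i$. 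What you prove is the stronger creation statement, which is in fact what the paper needs downstream, so the extra care with the associativity axiom is worth taking.
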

One easily deduces:

\begin{proposition}\label{pro:forgetmod}
Assume that $\shc$ is abelian \lp resp.\ quasi-abelian\rp\, and  
 the functor $A\tens\scbul$ commutes with  cokernels. Then $\Mod(A)$ is abelian 
\lp resp.\ quasi-abelian\rp. Moreover, the forgetful functor
$\For\cl\Mod(A)\to\shc$
is additive, faithfull, conservative and commutes with  kernels and cokernels. 
If one assumes moreover that $\shc$ admits small inductive limits and that $A\tens\scbul$ commutes with such limits, then 
$\Mod(A)$ admits small inductive limits and the forgetful functor $\For$ commutes with such limits.
\end{proposition}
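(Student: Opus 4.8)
The plan is to transport all the relevant structure from $\shc$ to $\Mod(A)$ along the forgetful functor $\For$, relying on the properties recorded in Lemma~\ref{le:forgetmod} (that $\For$ is additive, faithful and conservative, and reflects the limits and colimits in question). The underlying idea is to \emph{compute} kernels and cokernels of $\Mod(A)$ in $\shc$ and to equip the result with a canonical $A$-action; once $\For$ is known to commute with both kernels and cokernels, the abelian (resp.\ quasi-abelian) axioms and the assertion on inductive limits will descend formally.

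First I would treat kernels. Given $f\cl M\to N$ in $\Mod(A)$, let $i\cl K\to M$ be the kernel of $\For(f)$ in $\shc$. Composing $\nu_M\comp(A\tens i)\cl A\tens K\to M$ with $f$ and using the $A$-linearity of $f$ gives $f\comp\nu_M\comp(A\tens i)=\nu_N\comp(A\tens(f\comp i))=0$, so this morphism factors uniquely (since $i$ is a monomorphism) through $K$, defining an action $\nu_K\cl A\tens K\to K$. Associativity and unitality follow at once by the uniqueness of such factorizations, and the universal property of $K=\ker\For(f)$ then lifts, by faithfulness of $\For$, to the universal property of a kernel in $\Mod(A)$. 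Hence $\Mod(A)$ has kernels and $\For$ commutes with them, in agreement with the fact (Lemma~\ref{le:forgetmod}) that $\For$ reflects projective limits.

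The heart of the argument is the construction of cokernels, and this is the only place where the hypothesis that $A\tens\scbul$ commutes with cokernels is used. Given $f\cl M\to N$, let $p\cl N\to C$ be the cokernel of $\For(f)$ in $\shc$. Because $A\tens\scbul$ commutes with cokernels, $A\tens C$ is the cokernel of $A\tens f\cl A\tens M\to A\tens N$. Now $p\comp\nu_N\comp(A\tens f)=p\comp f\comp\nu_M=0$ by $A$-linearity of $f$, so $p\comp\nu_N\cl A\tens N\to C$ factors uniquely through $A\tens C$, yielding an action $\nu_C\cl A\tens C\to C$; again associativity and unitality follow by uniqueness. The universal property of $C$ in $\shc$ then lifts to $\Mod(A)$, so $\Mod(A)$ admits cokernels and $\For$ commutes with them. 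I expect this step to be the main obstacle: verifying that the cokernel computed in $\shc$ carries a well-defined $A$-action is precisely where the tensor--cokernel compatibility is indispensable, and without it the cokernel in $\shc$ need not lift.

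With kernels and cokernels both available and preserved by $\For$, the remainder is formal. Since $\For$ is faithful and conservative and commutes with $\ker$ and $\coker$, hence with $\im$ and $\coim$, a morphism $g$ of $\Mod(A)$ is a monomorphism (resp.\ an epimorphism, resp.\ strict) exactly when $\For(g)$ is so in $\shc$. In the abelian case the canonical morphism $\coim g\to\im g$ is sent by $\For$ to an isomorphism, hence is itself an isomorphism by conservativity, so $\Mod(A)$ is abelian. In the quasi-abelian case I would check the two axioms of Definition~\ref{def:qabcat}: fibre products and pushouts in $\Mod(A)$ are built from biproducts together with kernels and cokernels, all preserved by the additive functor $\For$, so they too are computed in $\shc$; stability of strict epimorphisms under base change (resp.\ of strict monomorphisms under cobase change) therefore descends from the corresponding property in $\shc$ via the above characterization of strictness. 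Finally, for the statement on inductive limits, given a small diagram in $\Mod(A)$ I would take the colimit $L$ of its image in $\shc$; since $A\tens\scbul$ preserves this colimit, the componentwise actions assemble---exactly as in the cokernel construction---into an action $A\tens L\to L$, and Lemma~\ref{le:forgetmod} (reflection of inductive limits preserved by $A\tens\scbul$) shows that $L$ is the colimit in $\Mod(A)$ and that $\For$ commutes with it.
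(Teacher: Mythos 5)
Your proposal is correct and follows essentially the same route as the paper, which simply deduces the proposition from Lemma~\ref{le:forgetmod} (the paper leaves the details to the reader): compute kernels, cokernels and inductive limits in $\shc$, lift the $A$-action using the compatibility of $A\tens\scbul$ with cokernels and colimits, and invoke the faithfulness, conservativity and reflection properties of $\For$ to transfer the abelian or quasi-abelian axioms. Your write-up just makes explicit the verifications the paper omits.
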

\begin{remark}
Let $A$ be a ring object in $\shc$.
We have defined an $A$-module by considering the left action of $A$. In other words, we have defined left $A$-modules. Clearly, one can defined right $A$-modules similarly, which is equivalent to replacing the tensor product $\tens$ with the opposite tensor product given by  $X\tens^\rop Y=Y\tens X$.
\end{remark}

 Assume that  the tensor category $\shc$ admits a unit, denoted  $\bfone$ and that the ring object $A$ also admits a unit
 $e\cl \bfone\to A$. In this case, we consider the full subcategory $\md[A_e]$ of $\md[A]$ consisting of modules 
 such that the action of $A$ is unital, which is translated by saying that the diagram below commutes:
 \eqn
 &&\xymatrix{
 \bfone\tens M\ar[r]^-{e\tens\id_M}\ar[rd]_-\sim&A\tens M\ar[d]^-{\nu_M}\\
 &M.
 }\eneqn 
 All results concerning $\md[A]$ still hold for $\md[A_e]$.

\begin{proposition}\label{pro:generator}
Let $\shc$ be  Grothendieck category with  a generator $G$ and assume that $\shc$ is also a tensor category 
with unit $\bfone$. 
Let  $A$ be a ring in $\shc$ with unit $e$ and assume that  the functor $A\tens\scbul$ commutes with  small inductive limits.      
Then $\md[A_e]$ is a  Grothendieck category and $A\tens G$ is a generator of this category.
\end{proposition}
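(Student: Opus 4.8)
The plan is to reduce everything to Proposition~\ref{pro:forgetmod} together with the free–forgetful adjunction for $A$-modules. Since $A\tens\scbul$ commutes with small inductive limits (in particular with cokernels), Proposition~\ref{pro:forgetmod} and the observation that all its conclusions carry over to $\Mod(A_e)$ tell us that $\Mod(A_e)$ is abelian, admits small inductive limits, and that the forgetful functor $\For\cl\Mod(A_e)\to\shc$ is additive, faithful, conservative, exact (it commutes with kernels and cokernels, hence with images) and commutes with small inductive limits. In particular, because $\For$ commutes with kernels, cokernels and images and is conservative, a sequence of $\Mod(A_e)$ is exact if and only if its image under $\For$ is exact in $\shc$; that is, $\For$ reflects exactness. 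Thus $\Mod(A_e)$ satisfies all the axioms of a Grothendieck category except possibly the existence of a generator, and the verification of \textup{AB5} and the production of a generator will both be transferred to $\shc$ through $\For$.

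To check \textup{AB5}, I would take a filtrant inductive system $\{0\to M'_i\to M_i\to M''_i\to 0\}_i$ of exact sequences in $\Mod(A_e)$. Applying $\For$, which commutes with filtrant colimits, sends the sequence obtained by taking the colimit in $\Mod(A_e)$ to the colimit in $\shc$ of the exact sequences $0\to\For M'_i\to\For M_i\to\For M''_i\to 0$. As $\shc$ is a Grothendieck category this colimit is exact in $\shc$, and since $\For$ reflects exactness the colimit sequence is exact in $\Mod(A_e)$. Hence filtrant inductive limits are exact in $\Mod(A_e)$.

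Next I would produce the generator. Equip $A\tens G$ with the free module structure, namely the unital action $A\tens(A\tens G)\isoto(A\tens A)\tens G\to[\mu_A\tens\id_G]A\tens G$ coming from the associativity constraint; the unit axiom $\mu_A\comp(e\tens\id_A)=\id_A$ shows this action is unital, so $A\tens G\in\Mod(A_e)$. More generally $A\tens\scbul\cl\shc\to\Mod(A_e)$ is the free module functor, left adjoint to $\For$, with unit $e\tens\id_X\cl X\to A\tens X$ and counit the action $\nu_M\cl A\tens M\to M$, giving a bijection natural in $X\in\shc$ and $M\in\Mod(A_e)$,
\[
	\Hom[\Mod(A_e)](A\tens X, M)\isoto\Hom[\shc](X,\For M),
\]
sending $f$ to $f\comp(e\tens\id_X)$ (via $\bfone\tens X\simeq X$), with inverse $g\mapsto\nu_M\comp(\id_A\tens g)$; the unitality of $M$ is precisely what makes these maps mutually inverse.

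Finally I would take $X=G$ to obtain an isomorphism of functors $\Hom[\Mod(A_e)](A\tens G,\scbul)\simeq\Hom[\shc](G,\For(\scbul))$. Here $\For$ is faithful and, $G$ being a generator of $\shc$, the functor $\Hom[\shc](G,\scbul)$ is faithful; hence their composite, and therefore $\Hom[\Mod(A_e)](A\tens G,\scbul)$, is faithful, so $A\tens G$ is a generator of $\Mod(A_e)$. Together with the first two paragraphs this shows $\Mod(A_e)$ is a Grothendieck category. The only genuinely delicate point is the construction of the free–forgetful adjunction and the check that $A\tens G$ is a \emph{unital} module; once the adjunction is in hand, both the reduction of \textup{AB5} to $\shc$ and the transfer of the generator from $G$ to $A\tens G$ are formal consequences of the exactness, faithfulness, conservativity and limit-preservation properties of $\For$.
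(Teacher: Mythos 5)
Your proof is correct, but it is organized differently from the paper's. The paper's argument is a direct three-line construction: since $G$ is a generator of $\shc$, for every $M$ the canonical morphism $\bigoplus_{s\in\Hom[\shc](G,M)}G\to M$ is an epimorphism in $\shc$; applying $A\tens\scbul$ (which preserves direct sums and epimorphisms because it preserves small inductive limits) gives an epimorphism $\bigoplus_{s}A\tens G\to A\tens M$ of $A$-modules, and composing with the action morphism $A\tens M\epito M$ (an epimorphism precisely because the action is unital) exhibits every $M$ as a quotient of a direct sum of copies of $A\tens G$. You instead establish the free--forgetful adjunction $A\tens\scbul\dashv\For$ and deduce that $\Hom[\md[A_e]](A\tens G,\scbul)\simeq\Hom[\shc](G,\For(\scbul))$ is faithful. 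The two routes are two faces of the same fact --- the paper's epimorphism $A\tens M\epito M$ is exactly the counit of your adjunction, and both rest on $A\tens\scbul$ preserving colimits and on unitality of the action --- but yours buys a reusable structural statement (the adjunction with explicit unit and counit, and the observation that unitality of $M$ is what makes them mutually inverse) at the cost of more verification, while the paper's version is shorter and never names the adjunction. You also spell out why $\md[A_e]$ is abelian with exact small filtrant inductive limits by transporting AB5 through the exact, colimit-preserving, conservative forgetful functor; the paper leaves this entirely to Proposition~\ref{pro:forgetmod} and the remark that all results for $\md[A]$ persist for $\md[A_e]$, so this part of your write-up is a welcome expansion rather than a divergence.
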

\begin{proof}
For any $X\in\shc$ the morphism
\eqn
&&\bigoplus_{s\in\Hom[\shc](G,X)}G\to X
\eneqn
is an epimorphism. 
It follows that for any $M\in\md[A_e]$ the morphism
\eqn
&&\bigoplus_{s\in\Hom[\shc](G,M)}A\tens G\to A\tens M
\eneqn
is an epimorphism. Since $A$ has a unit, there is an epimorphism $A\tens M\epito M$. 
\end{proof}
\subsubsection*{$\mon$-rings and $\mon$-modules}

In this section, we shall assume
\eq\label{hyp:sect4a}
&&\left\{\parbox{60ex}{
$\mon$ is a filtrant preordered additive semigroup (viewed as a tensor category),\\
$\shc$ is an abelian tensor category which admits small inductive limits 
which   commute with $\tens$ and  small 
filtrant inductive limits are exact.
}\right.\eneq


\begin{definition}
For  $M_1,M_2\in\Fct(\mon,\shc)$ we define $M_1\tens M_2\in\Fct(\mon,\shc)$ as follows.
For  $\lambda,\lambda'\in\mon$ and  $s\cl\lambda'\to\lambda$ we set
\eqn
&&(M_1\tens M_2)(\lambda) =\indlim[\lambda_1+\lambda_2\leq\lambda] M_1(\lambda_1)\tens M_2(\lambda_2),
\eneqn
and we define $(M_1\tens M_2)(s) \cl (M_1\tens M_2)(\lambda')\to (M_1\tens M_2)(\lambda)$
to be the morphism induced by the inclusion
\eqn
&&\{(\lambda'_1,\lambda'_2)\in\mon\times\mon\cl\lambda'_1+\lambda'_2\leq\lambda'\}
\subset
\{(\lambda_1,\lambda_2)\in\mon\times\mon\cl\lambda_1+\lambda_2\leq\lambda\}.
\eneqn
\end{definition}

\begin{proposition}\label{pro:tensmontens}
The functor
\[
	\tens\cl\Fct(\mon,\shc)\times\Fct(\mon,\shc)\to\Fct(\mon,\shc)
\]
defined above turns  $\Fct(\mon,\shc)$ into a tensor category. Moreover, it commutes with small inductive limits.
\end{proposition}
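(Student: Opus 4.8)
The plan is to recognize this $\tens$ as a Day-type convolution and to reduce every axiom of a tensor category to the corresponding property of $\tens$ on $\shc$ via the universal property of inductive limits. Throughout I use two facts: small inductive limits in $\Fct(\mon,\shc)$ are computed pointwise (since $\shc$ is cocomplete), and the standing hypothesis~\eqref{hyp:sect4a} that $\tens$ on $\shc$ commutes with small inductive limits in each variable and that $+\cl\mon\times\mon\to\mon$ is a functor, i.e.\ is compatible with the preorder. I would organize the argument into three steps: (i) $\tens$ is a bifunctor commuting with small inductive limits, additivity in each variable being the case of finite coproducts; (ii) construction of the associativity isomorphism; (iii) the pentagon axiom. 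Note that no unit is needed, since the notion of tensor category used here asks only for an associative biadditive product.

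For step (i), fix $N\in\Fct(\mon,\shc)$ and a small inductive system $\{M_i\}$ with pointwise colimit $M=\indlim[i]M_i$. Evaluating at $\lambda$, commuting $\tens$ on $\shc$ past the inner colimit, and then exchanging the two colimits gives
\[
 \Bigl(\bigl(\indlim[i]M_i\bigr)\tens N\Bigr)(\lambda)
 =\indlim[\lambda_1+\lambda_2\leq\lambda]\bigl(\indlim[i]M_i(\lambda_1)\bigr)\tens N(\lambda_2)
 \isoto\indlim[i]\,\indlim[\lambda_1+\lambda_2\leq\lambda]M_i(\lambda_1)\tens N(\lambda_2),
\]
and the right-hand side is $\bigl(\indlim[i](M_i\tens N)\bigr)(\lambda)$ because colimits in $\Fct(\mon,\shc)$ are pointwise. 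These isomorphisms are natural in $\lambda$, so $(-)\tens N$ commutes with small inductive limits; the symmetric computation handles $M\tens(-)$. Functoriality in morphisms is immediate from functoriality of colimits and of $\tens$ on $\shc$.

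For step (ii), the crux is that both $\bigl((M_1\tens M_2)\tens M_3\bigr)(\lambda)$ and $\bigl(M_1\tens(M_2\tens M_3)\bigr)(\lambda)$ are canonically isomorphic to the single inductive limit
\[
 T(\lambda)\eqdot\indlim[\lambda_1+\lambda_2+\lambda_3\leq\lambda]M_1(\lambda_1)\tens M_2(\lambda_2)\tens M_3(\lambda_3),
\]
the triple product being unambiguous up to the associativity isomorphism of $\tens$ on $\shc$. Expanding the left bracketing and pulling $\tens M_3(\lambda_3)$ inside the inner colimit (permitted by~\eqref{hyp:sect4a}) presents it as the iterated colimit over the total category $D=\{((\mu,\lambda_3),(\lambda_1,\lambda_2))\cl\lambda_1+\lambda_2\leq\mu,\ \mu+\lambda_3\leq\lambda\}$. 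I would then verify that the projection $p\cl D\to\{(\lambda_1,\lambda_2,\lambda_3)\cl\lambda_1+\lambda_2+\lambda_3\leq\lambda\}$ forgetting $\mu$ is cofinal: it is well defined since $\lambda_1+\lambda_2+\lambda_3\leq\mu+\lambda_3\leq\lambda$ by monotonicity of $+$, and for each target $(\lambda_1,\lambda_2,\lambda_3)$ the object $((\lambda_1+\lambda_2,\lambda_3),(\lambda_1,\lambda_2))$ is initial in the comma category $(\lambda_1,\lambda_2,\lambda_3)/p$ — any competing $((\mu',\lambda_3'),(\lambda_1',\lambda_2'))$ receives a unique map from it because $\lambda_1+\lambda_2\leq\lambda_1'+\lambda_2'\leq\mu'$, again by monotonicity. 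Cofinality then yields the canonical identification with $T(\lambda)$; the symmetric reduction treats the right bracketing, and composing the two produces $\alpha_{M_1,M_2,M_3}$, natural in $\lambda$ and hence a morphism of $\Fct(\mon,\shc)$.

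Finally, for step (iii), the same cofinality reduction identifies all five bracketings of $M_1\tens M_2\tens M_3\tens M_4$ with $\indlim[\lambda_1+\cdots+\lambda_4\leq\lambda]M_1(\lambda_1)\tens\cdots\tens M_4(\lambda_4)$, and exhibits each edge of the pentagon as the comparison isomorphism induced, term by term, by the corresponding instance of the associativity isomorphism of $\tens$ on $\shc$. By uniqueness of morphisms out of an inductive limit, the pentagon for $\Fct(\mon,\shc)$ thus reduces pointwise to the pentagon for $\shc$. I expect the main obstacle to be step (ii): presenting the iterated colimit as a total category and checking cofinality cleanly, together with the bookkeeping needed to confirm that the resulting isomorphisms are natural in $\lambda$ and genuinely coincide with the structural isomorphisms entering the pentagon. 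Once the triple- and quadruple-colimit descriptions are established, steps (i) and (iii) are formal.
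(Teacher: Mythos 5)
Your proof is correct and follows essentially the same route as the paper's: both arguments rest on identifying each bracketing of $(M_1\tens M_2\tens M_3)(\lambda)$ with the single inductive limit over $\{\lambda_1+\lambda_2+\lambda_3\leq\lambda\}$ and then invoking associativity of $\tens$ on $\shc$, with commutation with inductive limits read off from the definition. The paper simply asserts these canonical isomorphisms, whereas you supply the cofinality argument and the pentagon check that justify them.
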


\begin{proof}
The fact that the functor commutes with small inductive limits
follows from its definition. The  associativity follows from the associativity of the tensor product in $\shc$ 
and the canonical isomorphisms
\eqn
((M_1\tens M_2)\tens M_3)(\lambda) 
&\simeq&\indlim[\lambda_1+\lambda_2+\lambda_3\leq\lambda](M_1(\lambda_1)\tens M_2(\lambda_2))\tens M_3(\lambda_3),\\
(M_1\tens(M_2\tens M_3))(\lambda) 
&\simeq&\indlim[\lambda_1+\lambda_2+\lambda_3\leq\lambda]M_1(\lambda_1)\tens(M_2(\lambda_2))\tens M_3(\lambda_3)).
\eneqn
\end{proof}

\begin{definition}
\banum
\item
A \emph{$\mon$-ring of $\shc$} is  a ring of the tensor category $\Fct(\mon,\shc)$ considered in 
Proposition~\ref{pro:tensmontens}.
\item
A \emph{$\mon$-module of $\shc$} over a $\mon$-ring $A$ of $\shc$ is an $A$-module of the tensor category 
$\Fct(\mon,\shc)$. 
\item
As usual, we denote by $\Mod(A)$ the category of $A$-modules, that is, $\mon$-modules in $\shc$ over the $\mon$-ring $A$.
\eanum
\end{definition}

\begin{remark}\label{rem:multitens}
It follows from the preceding definitions that a $\mon$-ring of $\shc$ is the data of a functor 
$A\cl\mon\to\shc$ together with a multiplication morphism
\[
	A(\lambda_1)\tens A(\lambda_2)\to A(\lambda_1+\lambda_2)
\]
functorial in $\lambda_1,\lambda_2\in\mon$ and 
associative in a natural way. Moreover, if $A$ is such a ring then a $\mon$-module of $\shc$ over $A$ is the data of a functor 
$M\cl\mon\to\shc$ together with a functorial associative action morphism
\[
	A(\lambda_1)\tens M(\lambda_2)\to M(\lambda_1+\lambda_2).
\]
\end{remark}

\begin{remark}\label{rem:unitinmonrings}
Assume that the semigroup  $\mon$ admits a unit, denoted $0$ (in which case, one says that $\mon$ is a monoid), and the tensor category $\shc$ admits a unit, denoted 
 $\bfone$. Then the tensor category $\Fct(\mon,\shc)$ admits a unit, still denoted $\bfone_\mon$, defined as follows:
 \eqn
 &&\bfone_\mon(\lambda)=\left\{\begin{array}{ll}
 \bfone&\text{if }\lambda\geq0,\\
 0&\text{ otherwise.}
 \end{array}
 \right.
 \eneqn
 In such a case, the notion of a $\mon$-ring $A$ with unit $e$ makes sense as well as the notion of  an $A_e$-module. 
 \end{remark}

\begin{proposition}
Let $A$ be a $\mon$-ring of $\shc$. Then, the category $\Mod(A)$ is abelian and admits small inductive limits.
Moreover, the forgetful functor
$\For\cl\Mod(A)\to\Fct(\mon,\shc)$
is additive, faithfull, conservative and commutes with kernels and small inductive limits. In particular it is exact.
\end{proposition}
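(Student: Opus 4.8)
The plan is to deduce every assertion from Proposition~\ref{pro:forgetmod}, applied not to $\shc$ but to the tensor category $\Fct(\mon,\shc)$. By definition a $\mon$-ring $A$ is a ring object of $\Fct(\mon,\shc)$ and $\Mod(A)$ is the category of $A$-modules in that tensor category; thus the functor $\For\cl\Mod(A)\to\Fct(\mon,\shc)$ in the statement is exactly the forgetful functor to which Proposition~\ref{pro:forgetmod} refers, once the abelian tensor category there is taken to be $\Fct(\mon,\shc)$. The whole point is therefore to check that $\Fct(\mon,\shc)$ satisfies the hypotheses of that proposition.

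First I would record that $\Fct(\mon,\shc)$ is an abelian tensor category admitting small inductive limits. It is abelian since $\shc$ is abelian and kernels, cokernels and (co)limits in a functor category are computed objectwise; the small inductive limits of $\shc$ granted by~\eqref{hyp:sect4a} then furnish small inductive limits in $\Fct(\mon,\shc)$, again objectwise. Its tensor structure, and the assertion that it is indeed a tensor category, are provided by Proposition~\ref{pro:tensmontens}.

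Next I would verify the two conditions that Proposition~\ref{pro:forgetmod} imposes on the endofunctor $A\tens\scbul$. Both follow from the last sentence of Proposition~\ref{pro:tensmontens}, namely that $\tens$ on $\Fct(\mon,\shc)$ commutes with small inductive limits. Since a cokernel is a particular small inductive limit, $A\tens\scbul$ commutes with cokernels, which is what is needed for the abelian conclusion; and it commutes with all small inductive limits, which is what is needed for the existence of small inductive limits in $\Mod(A)$.

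Applying Proposition~\ref{pro:forgetmod} to $\Fct(\mon,\shc)$ then yields at once that $\Mod(A)$ is abelian, that it admits small inductive limits, and that $\For$ is additive, faithful, conservative and commutes with kernels, with cokernels, and with small inductive limits. The remaining claim that $\For$ is exact is then formal: a functor between abelian categories commuting with both kernels and cokernels is left and right exact, hence exact. I expect no genuine obstacle here; the only care required is the bookkeeping that identifies $\Mod(A)$ and $\For$ correctly as instances of the constructions of Proposition~\ref{pro:forgetmod} over the base $\Fct(\mon,\shc)$.
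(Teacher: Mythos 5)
Your proposal is correct and follows exactly the paper's route: the paper's own proof is the one-line remark that the statement ``follows directly from the preceding results and Proposition~\ref{pro:forgetmod}'', and you have simply spelled out that deduction, applying Proposition~\ref{pro:forgetmod} to the abelian tensor category $\Fct(\mon,\shc)$ with the hypotheses supplied by Proposition~\ref{pro:tensmontens}.
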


\begin{proof}
This follows directly from the preceding results and Proposition~\ref{pro:forgetmod}
\end{proof}

\subsubsection*{Filtered rings and modules}

\begin{definition}
We define the functor
\[
	\tens_F\cl\Fil_\mon\shc\times\Fil_\mon\shc\to\Fil_\mon\shc
\]
by the formula
\[
	M_1\tens_F M_2 =\kappa(\iota(M_1)\tens\iota(M_2))
\]
where the tensor product in the right-hand side is the tensor product of $\Fct(\mon,\shc)$.
\end{definition}

\begin{proposition}\label{prp:kappatens}
There is a canonical isomorphism
\[
	\kappa(M_1\tens M_2)\simeq\kappa(M_1)\tens[F]\kappa(M_2)
\]
functorial  in $M_1,M_2\in\Fct(\mon,\shc)$.
\end{proposition}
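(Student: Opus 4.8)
The plan is to construct the isomorphism as the image under $\kappa$ of the morphism induced by the unit of the adjunction $(\kappa,\iota)$, and then to verify that it is an isomorphism by checking it at every $\lambda\in\mon$. Let $\eta\cl\id\to\iota\kappa$ denote the unit of the adjunction of Proposition~\ref{prp:kappa}. By Definition~\ref{def:kappa}, for $M\in\Fct(\mon,\shc)$ and $\lambda\in\mon$ the component $\eta_M(\lambda)\cl M(\lambda)\to\iota\kappa(M)(\lambda)=\im j_M(\lambda)$ is the canonical epimorphism onto the image. Using the functoriality of the tensor product of $\Fct(\mon,\shc)$ (Proposition~\ref{pro:tensmontens}) I would form
\[
g\seteq\eta_{M_1}\tens\eta_{M_2}\cl M_1\tens M_2\to\iota\kappa(M_1)\tens\iota\kappa(M_2),
\]
and then apply $\kappa$; by the definition of $\tens[F]$ this yields
\[
\kappa(g)\cl\kappa(M_1\tens M_2)\to\kappa(\iota\kappa(M_1)\tens\iota\kappa(M_2))=\kappa(M_1)\tens[F]\kappa(M_2).
\]
Since $\eta$, $\tens$ and $\kappa$ are all natural, the morphism $\kappa(g)$ is natural in $M_1,M_2$; so it suffices to prove that $\kappa(g)$ is an isomorphism, and the asserted functoriality is then automatic.

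A morphism of $\Fil_\mon\shc$ is an isomorphism as soon as it is one at every $\lambda$, and $\kappa(N)(\lambda)=\im j_N(\lambda)$ by Definition~\ref{def:kappa}. Writing $N=M_1\tens M_2$ and $N'=\iota\kappa(M_1)\tens\iota\kappa(M_2)$, I would establish two facts. First, $g(\infty)=\sindlim(g)$ is an isomorphism. Here the functor $\sindlim$ is monoidal, in the sense that $\sindlim(M_1\tens M_2)\simeq(\sindlim M_1)\tens(\sindlim M_2)$, as one sees by interchanging the nested colimits in the defining formula for $\tens$ and using that $\tens$ commutes with small inductive limits in each variable under~\eqref{hyp:sect4a}; under this identification $g(\infty)$ becomes $\sindlim(\eta_{M_1})\tens\sindlim(\eta_{M_2})$, a tensor product of isomorphisms, each factor being invertible because the subobjects $\im j_{M_i}(\lambda)$ of $M_i(\infty)$ have union $M_i(\infty)$.

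Second, $g(\lambda)$ is an epimorphism for every $\lambda$. Indeed $g(\lambda)$ is the colimit, over the pairs with $\lambda_1+\lambda_2\le\lambda$, of the morphisms $\eta_{M_1}(\lambda_1)\tens\eta_{M_2}(\lambda_2)$; each of these is an epimorphism, being a tensor product of the epimorphisms $\eta_{M_i}(\lambda_i)$ (using that $\tens$ is right exact, again by~\eqref{hyp:sect4a}), and colimits preserve epimorphisms. Granting these two facts, the conclusion is formal: in the commutative square with horizontal arrows $j_N(\lambda),j_{N'}(\lambda)$ and vertical arrows $g(\lambda),g(\infty)$, invertibility of $g(\infty)$ identifies the subobject $\im j_N(\lambda)$ of $N(\infty)$ with $g(\infty)(\im j_N(\lambda))=\im[j_{N'}(\lambda)\comp g(\lambda)]$, which coincides with $\im j_{N'}(\lambda)$ because $g(\lambda)$ is an epimorphism; this identification is exactly $\kappa(g)(\lambda)$.

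I expect the main obstacle to lie in the first fact, namely in checking carefully that $\sindlim$ respects both tensor products \emph{compatibly with $g$}: this rests on the cofinality used to collapse the double colimit defining $N(\infty)$ and on the exactness of filtrant colimits provided by~\eqref{hyp:sect4a}. By contrast, the epimorphism statement and the final diagram chase are routine once right exactness of $\tens$ and the description $\kappa(N)(\lambda)=\im j_N(\lambda)$ are in hand.
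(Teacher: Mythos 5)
Your proposal is correct and follows essentially the same route as the paper: both reduce the statement to checking that $g(\lambda)$ is an epimorphism for every $\lambda$ (via tensoring the epimorphisms $M_i(\lambda_i)\to\kappa(M_i)(\lambda_i)$ and passing to the colimit) and that $g(\infty)$ is an isomorphism (via $(M_1\tens M_2)(\infty)\simeq M_1(\infty)\tens M_2(\infty)\simeq\kappa(M_1)(\infty)\tens\kappa(M_2)(\infty)$), then identify the two images. The only difference is that you spell out the adjunction-unit construction of the comparison map and the final diagram chase, which the paper leaves implicit.
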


\begin{proof}
We know that for $\lambda, \lambda_1,\lambda_2\in\mon$, $\kappa(M_1\tens M_2)(\lambda)$
is the image of the canonical morphism $(M_1\tens M_2)(\lambda)\to(M_1\tens M_2)(\infty)$
and that $\kappa(M_1)(\lambda_1)$ and $\kappa(M_2)(\lambda_2)$ are respectively the images of the 
canonical morphisms $M_1(\lambda_1)\to M_1(\infty)$ and $M_2(\lambda_2)\to M_2(\infty)$. 
Since the morphisms
\[
	M_1(\lambda_1)\to\kappa(M_1)(\lambda_1)
	\quad\text{and}\quad
	M_2(\lambda_2)\to\kappa(M_2)(\lambda_2)
\]
are epimorphisms, so is the morphism
\[
	M_1(\lambda_1)\tens	M_2(\lambda_2)\to\kappa(M_1)(\lambda_1)\tens\kappa(M_2)(\lambda_2).
\]
It follows that the canonical morphism
\[
	\indlim[\lambda_1+\lambda_2\leq\lambda]M_1(\lambda_1)\tens M_2(\lambda_2) 
	\to 
	\indlim[\lambda_1+\lambda_2\leq\lambda]\kappa(M_1)(\lambda_1)\tens\kappa(M_2)(\lambda_2).
\]
is also an epimorphism. Since
\eqn
(M_1\tens M_2)(\infty) &\simeq& M_1(\infty)\tens M_2(\infty) 
	\simeq\kappa(M_1)(\infty)\tens\kappa(M_2)(\infty)\\
	&\simeq&\kappa(M_1\tens M_2)(\infty) 
\eneqn
the conclusion follows.
\end{proof}

\begin{proposition}
The functor $\tens_F\cl\Fil_\mon\shc\times\Fil_\mon\shc\to\Fil_\mon\shc$  turns  $\Fil_\mon\shc$ into a tensor category.
Moreover $\tens_F$ commutes with small inductive limits.
\end{proposition}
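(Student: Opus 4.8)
The plan is to establish the two assertions separately, building on the fact (Proposition~\ref{pro:tensmontens}) that $\tens$ already makes $\Fct(\mon,\shc)$ into a tensor category commuting with small inductive limits, together with the compatibility $\kappa(M_1\tens M_2)\simeq\kappa(M_1)\tens[F]\kappa(M_2)$ of Proposition~\ref{prp:kappatens}. The guiding idea is that $\tens_F=\kappa\comp\tens\comp(\iota\times\iota)$ should inherit its structural properties from $\tens$ by transport through the adjunction $(\kappa,\iota)$, using repeatedly that $\kappa\comp\iota\simeq\id$ (Proposition~\ref{prp:kappa}).

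First I would construct the associativity isomorphism for $\tens_F$. For $M_1,M_2,M_3\in\Fil_\mon\shc$, I would compute
\[
(M_1\tens[F]M_2)\tens[F]M_3
=\kappa\bl\iota\kappa(\iota M_1\tens\iota M_2)\tens\iota M_3\br.
\]
The obstacle here is that $\iota\kappa$ is not the identity on $\Fct(\mon,\shc)$, so I cannot simply erase it. The resolution is to apply Proposition~\ref{prp:kappatens} twice: since $\iota M_3\simeq\iota\kappa\iota M_3$, I get $\kappa(\iota\kappa(\iota M_1\tens\iota M_2)\tens\iota M_3)\simeq\kappa(\iota M_1\tens\iota M_2)\tens[F]\kappa\iota M_3$, and unwinding the definition of $\tens[F]$ this equals $\kappa((\iota M_1\tens\iota M_2)\tens\iota M_3)$. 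Symmetrically the right-associated bracketing reduces to $\kappa(\iota M_1\tens(\iota M_2\tens\iota M_3))$, and then the associativity isomorphism $\alpha$ for $\tens$ in $\Fct(\mon,\shc)$, pushed through the functor $\kappa$, supplies the desired $\alpha^F$. The pentagon and compatibility conditions for $\alpha^F$ follow from those for $\alpha$ because $\kappa$ is a functor and the identifications used are natural in each variable.

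Next I would prove that $\tens[F]$ commutes with small inductive limits. Here the key is that $\kappa$, being a left adjoint (Proposition~\ref{prp:kappa}), commutes with all colimits, while $\iota$ is a right adjoint so does not; however, the inductive limit of a diagram in $\Fil_\mon\shc$ can be computed by applying $\kappa$ to its inductive limit taken in $\Fct(\mon,\shc)$. Concretely, for a small inductive system $\{M_i\}_{i\in I}$ in $\Fil_\mon\shc$ and fixed $N\in\Fil_\mon\shc$, I would write $\sindlim_i(M_i\tens[F]N)=\sindlim_i\kappa(\iota M_i\tens\iota N)\simeq\kappa(\sindlim_i(\iota M_i\tens\iota N))$, then use that $\tens$ on $\Fct(\mon,\shc)$ commutes with $\sindlim$ to pull the colimit inside, obtaining $\kappa((\sindlim_i\iota M_i)\tens\iota N)$. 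The remaining point is to identify $\sindlim_i\iota M_i$ with $\iota(\sindlim^{\Fil}_i M_i)$ up to the action of $\kappa$; applying $\kappa$ and Proposition~\ref{prp:kappatens} once more collapses this to $(\sindlim^{\Fil}_i M_i)\tens[F]N$, which is the required commutation.

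The main obstacle I anticipate is precisely the bookkeeping forced by $\iota\kappa\not\simeq\id$: every manipulation must insert or remove a $\kappa$ at the right place and justify it through Proposition~\ref{prp:kappatens} rather than by cancellation. A secondary subtlety is that inductive limits in $\Fil_\mon\shc$ are not computed componentwise as in $\Fct(\mon,\shc)$ but rather as $\kappa$ applied to the ambient limit; I would state this reduction cleanly at the outset so that the colimit-commutation argument becomes a formal consequence of $\kappa$ being a left adjoint together with the fact that $\tens$ already commutes with small inductive limits in $\Fct(\mon,\shc)$. Once these two identifications are set up, both claims reduce to naturality statements transported along the adjunction, and no genuinely new computation is needed.
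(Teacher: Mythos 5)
Your proposal is correct and follows essentially the same route as the paper: both reduce $(M_1\tens_F M_2)\tens_F M_3$ and $M_1\tens_F(M_2\tens_F M_3)$ to $\kappa$ applied to the two bracketings of $\iota M_1\tens\iota M_2\tens\iota M_3$ via Proposition~\ref{prp:kappatens} together with $\kappa\comp\iota\simeq\id$, and then transport the associativity of $\tens$ on $\Fct(\mon,\shc)$ through $\kappa$; the colimit statement likewise follows in both from $\kappa$ being a left adjoint. Your write-up merely makes explicit two points the paper leaves implicit, namely the coherence of the induced associator and the fact that inductive limits in $\Fil_\mon\shc$ are computed as $\kappa$ of the ambient limit.
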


\begin{proof}
By the preceding result we have
\eqn
\kappa((\iota(M_1)\tens\iota(M_2))\tens\iota(M_3))&\simeq&\kappa(\iota(M_1)\tens\iota(M_2))\tens_F\kappa(\iota(M_3))\\
	&\simeq&(M_1\tens_F M_2)\tens_F M_3,\\
\kappa(\iota(M_1)\tens(\iota(M_2)\tens\iota(M_3)))
	&\simeq&\kappa(\iota(M_1))\tens_F\kappa(\iota(M_2)\tens\iota(M_3))\\
	&\simeq& M_1\tens_F(M_2\tens_F M_3).
\eneqn
Hence the associativity of $\tens_F$ follows from that of the tensor product of $\Fct(\mon,\shc)$. Since $\kappa$ 
commutes with small inductive limits, a similar argument shows that $\tens_F$ has the same property.
\end{proof}

\begin{definition}
\banum
\item
A \emph{$\mon$-filtered ring of $\shc$} is a ring object in the tensor category $\Fil_\mon\shc$. 
\item
A \emph{$\mon$-filtered module} $FM$ over a $\mon$-filtered ring $FA$, or simply, an $FA$-module $FM$, 
is an $FA$-module in the tensor category $\Fil_\mon\shc$. 
\item
As usual, we denote by $\Mod(FA)$ the category of $FA$-modules. 
\eanum
\end{definition}

\begin{remark}
It follows from the preceding definitions that $\Mod(FA)$ is the full subcategory of $\Mod(A)$ formed by the functors which send morphisms of $\mon$ to monomorphisms of $\shc$. The multiplication on $FA$ and the action of $FA$ on a module $FM$ may be described as in Remark~\ref{rem:multitens}.
\end{remark}

\begin{proposition}
Let $FA$ be a $\mon$-filtered ring of $\shc$. 
The category $\Mod(FA)$ is quasi-abelian and admits small inductive limits.
Moreover, the forgetful functor $\For\cl\Mod(FA)\to\Fil_\mon\shc$
is additive, faithfull, conservative and commutes with kernels and inductive limits.
\end{proposition}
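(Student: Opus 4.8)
The plan is to deduce this result directly from the abelian case already established in Proposition~\ref{pro:forgetmod}, exactly as the preceding proposition (for $\mon$-rings) was deduced. The statement asserts four things about $\Mod(FA)$, where $FA$ is a ring object in the quasi-abelian tensor category $\Fil_\mon\shc$: that it is quasi-abelian, that it admits small inductive limits, and that $\For$ is additive, faithful, conservative and commutes with kernels and inductive limits. Since $\Fil_\mon\shc$ is quasi-abelian (Theorem~\ref{th:quasiab}) and $\tens[F]$ commutes with small inductive limits (hence in particular $FA\tens[F]\scbul$ commutes with cokernels and with small inductive limits), the hypotheses of Proposition~\ref{pro:forgetmod} are met with $\shc$ replaced by $\Fil_\mon\shc$ and $A\tens\scbul$ replaced by $FA\tens[F]\scbul$. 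Applying that proposition immediately yields all the claims. So the proof should read, essentially verbatim, ``This follows directly from the preceding results and Proposition~\ref{pro:forgetmod}.''

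Before invoking Proposition~\ref{pro:forgetmod}, I would verify its two standing hypotheses in the present setting. First, $\Fil_\mon\shc$ is quasi-abelian, which is Theorem~\ref{th:quasiab}. Second, I need that the endofunctor $FA\tens[F]\scbul\cl\Fil_\mon\shc\to\Fil_\mon\shc$ commutes with cokernels. This is the point requiring genuine (if brief) argument, because $\tens[F]$ is defined through $\kappa$ via $M_1\tens[F]M_2=\kappa(\iota(M_1)\tens\iota(M_2))$, and cokernels in $\Fil_\mon\shc$ are computed by applying $\kappa$ to cokernels in $\Fct(\mon,\shc)$ (Proposition~\ref{prp:kappa}). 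The cleanest route is to note that the preceding proposition already proved $\tens[F]$ commutes with small inductive limits; since cokernels are a particular small colimit, commutation with cokernels is a special case. I would state this explicitly as the justification that the functor $FA\tens[F]\scbul$ commutes with cokernels, which is precisely the hypothesis ``the functor $A\tens\scbul$ commutes with cokernels'' appearing in Proposition~\ref{pro:forgetmod}.

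For the ``admits small inductive limits'' clause, the third sentence of Proposition~\ref{pro:forgetmod} requires that the ambient category admit small inductive limits and that the tensoring functor commute with them; both hold here ($\Fil_\mon\shc$ admits small inductive limits and $\tens[F]$ commutes with them, again by the preceding proposition), so the conclusion that $\Mod(FA)$ admits small inductive limits and that $\For$ commutes with them transfers without change. I would therefore organize the proof as: (1) recall $\Fil_\mon\shc$ is quasi-abelian with small inductive limits; (2) recall $FA\tens[F]\scbul$ commutes with small inductive limits, hence with cokernels; (3) apply Proposition~\ref{pro:forgetmod} to read off that $\Mod(FA)$ is quasi-abelian, admits small inductive limits, and that $\For$ is additive, faithful, conservative and commutes with kernels and inductive limits.

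The main (and only) obstacle is the verification in step~(2), namely confirming that $FA\tens[F]\scbul$ commutes with cokernels in the quasi-abelian category $\Fil_\mon\shc$; everything else is a transparent invocation of Proposition~\ref{pro:forgetmod}. I expect no difficulty there, since commutation of $\tens[F]$ with all small inductive limits was already established, and cokernels fall under that umbrella. There is a mild subtlety worth a sentence: cokernels in $\Fil_\mon\shc$ are not computed by $\iota$ but require $\kappa$, so one should be careful that ``commutes with cokernels'' is understood in $\Fil_\mon\shc$ itself, which is exactly what the commutation with inductive limits guarantees.
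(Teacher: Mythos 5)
Your proposal is correct and matches the paper's proof, which reads in its entirety ``This follows directly from the preceding results and Proposition~\ref{pro:forgetmod}''; you simply spell out the hypothesis checks (quasi-abelianity of $\Fil_\mon\shc$ and commutation of $FA\tens[F]\scbul$ with cokernels as a special case of commutation with small inductive limits) that the paper leaves implicit.
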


\begin{proof}
This follows directly from the preceding results and Proposition~\ref{pro:forgetmod}
\end{proof}

\begin{proposition}
Let $FA$ be a $\mon$-filtered ring of $\shc$. Then $A\eqdot\iota FA$ is a $\mon$-ring of $\shc$ and the functors
$\iota\cl\Fil_\mon(\shc)\to\Fct(\mon,\shc)$ and $\kappa\cl\Fct(\mon,\shc)\to\Fil_\mon(\shc)$
induce functors 
\[
	\iota_A\cl\Mod(FA)\to\Mod(A)
	\quad\text{and}\quad
	\kappa_A\cl\Mod(A)\to\Mod(FA).
\]
Moreover $\kappa_A$ is  a left adjoint of $\iota_A$.
\end{proposition}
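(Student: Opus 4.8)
The plan is to recognize the pair $(\kappa,\iota)$ as a monoidal adjunction and then to transport the ring and module structures along it. First I would record the two monoidal structures. By Proposition~\ref{prp:kappatens} the functor $\kappa$ is strong monoidal, the isomorphism $\kappa(M_1\tens M_2)\simeq\kappa(M_1)\tens[F]\kappa(M_2)$ being compatible with associativity by the very argument already used to establish the associativity of $\tens[F]$. Dually, $\iota$ carries a lax monoidal structure whose structural morphism $\iota(M_1)\tens\iota(M_2)\to\iota(M_1\tens[F]M_2)$ is, by the definition $M_1\tens[F]M_2=\kappa(\iota(M_1)\tens\iota(M_2))$, nothing but the unit $\iota(M_1)\tens\iota(M_2)\to\iota\kappa(\iota(M_1)\tens\iota(M_2))$ of the adjunction $(\kappa,\iota)$ of Proposition~\ref{prp:kappa}. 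That these two structures are compatible with the unit and counit of the adjunction is exactly the statement that $(\kappa,\iota)$ is a monoidal adjunction.

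Granting this, the first assertion is immediate, since a lax monoidal functor sends ring objects to ring objects: $A\eqdot\iota FA$, equipped with the composite $A\tens A=\iota FA\tens\iota FA\to\iota(FA\tens[F]FA)\to\iota FA=A$ (the first arrow being the lax structure morphism, the second being $\iota$ applied to the multiplication of $FA$), is a $\mon$-ring of $\shc$, its associativity following formally from the associativity of the multiplication of $FA$ and the coherence of the lax structure.

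For the second assertion I would apply the same transport to modules. Since $\iota$ is lax monoidal, an $FA$-module $FM$ with action $FA\tens[F]FM\to FM$ yields an $A$-module $\iota FM$ with action $A\tens\iota FM=\iota FA\tens\iota FM\to\iota(FA\tens[F]FM)\to\iota FM$, functorially in $FM$; this defines $\iota_A$. Since $\kappa$ is strong monoidal and $\kappa A=\kappa\iota FA\simeq FA$ by Proposition~\ref{prp:kappa}, an $A$-module $M$ with action $A\tens M\to M$ yields, after applying $\kappa$ and using $\kappa(A\tens M)\simeq\kappa A\tens[F]\kappa M\simeq FA\tens[F]\kappa M$, an $FA$-module structure on $\kappa M$; this defines $\kappa_A$.

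Finally, for the adjunction $\kappa_A\dashv\iota_A$, I would begin from the underlying bijection $\Hom[\Fil_\mon\shc](\kappa M,FN)\simeq\Hom[\Fct(\mon,\shc)](M,\iota FN)$ of Proposition~\ref{prp:kappa} and check that it restricts to a bijection between $FA$-linear and $A$-linear morphisms. Concretely, for $\phi\cl\kappa M\to FN$ corresponding to $\psi\cl M\to\iota FN$, one verifies by a diagram chase that the $FA$-linearity square for $\phi$ is carried, under the adjunction together with the compatibility of the monoidal structures with the unit and counit of $(\kappa,\iota)$, to the $A$-linearity square for $\psi$, and conversely; naturality then makes the bijection functorial. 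The main obstacle is precisely this last compatibility check: one must track the action morphisms through the strong and lax monoidal structures and through the unit and counit of $(\kappa,\iota)$, which is where the monoidal-adjunction property is genuinely used, all the remaining verifications being formal consequences of Propositions~\ref{prp:kappa} and~\ref{prp:kappatens}.
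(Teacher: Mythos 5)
Your proposal is correct and follows exactly the route the paper intends: its proof consists of the single line that the result ``follows easily from Proposition~\ref{prp:kappatens} and the fact that $\kappa$ is a left adjoint of $\iota$,'' which is precisely the monoidal-adjunction argument ($\kappa$ strong monoidal, $\iota$ lax monoidal, transport of ring and module structures) that you spell out. Your write-up simply makes explicit the coherence and linearity checks the authors leave to the reader.
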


\begin{proof}
This follows easily from  Proposition~\ref{prp:kappatens} and the fact that $\kappa$ is a left adjoint of $\iota$.
\end{proof}

\begin{proposition}
Let $FA$ be a $\mon$-filtered ring of $\shc$ and set $A\eqdot\iota FA$. Let $M$ be an $A$-module. Then
 the functor $\Rees(M)$ of Definition~\ref{def:rm} has a canonical structure of an $A$-module and the morphism 
 $\epsilon_M\cl\Rees(M)\to M$ is $A$-linear.
\end{proposition}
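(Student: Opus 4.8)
The plan is to equip $\Rees(M)$ with an $A$-action by transporting the action of $A$ on $M$ through the direct sums of Definition~\ref{def:rm}, and then to read off the $A$-linearity of $\epsilon_M$ from the defining formula $\epsilon_M(\lambda_0)\comp\sigma_{s_0}=M(s_0)$ of~\eqref{eq:morepsilonM}. By Remark~\ref{rem:multitens} it suffices to produce morphisms
\[
\beta_{\lambda_1,\lambda_2}\cl A(\lambda_1)\tens\Rees(M)(\lambda_2)\to\Rees(M)(\lambda_1+\lambda_2),
\]
functorial in $(\lambda_1,\lambda_2)$ and associative with respect to the multiplication of $A$.

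To construct $\beta_{\lambda_1,\lambda_2}$, I would use that $\tens$ commutes with small inductive limits, hence with the direct sum defining $\Rees(M)(\lambda_2)$, to obtain $A(\lambda_1)\tens\Rees(M)(\lambda_2)\simeq\bigoplus_{s\cl\lambda'\to\lambda_2}A(\lambda_1)\tens M(\lambda')$. On the summand indexed by $s\cl\lambda'\to\lambda_2$ I would take the composite of the action $A(\lambda_1)\tens M(\lambda')\to M(\lambda_1+\lambda')$ of $A$ on $M$ (Remark~\ref{rem:multitens}) with the structural morphism $\sigma$ of the target direct sum attached to the morphism $\lambda_1+\lambda'\to\lambda_1+\lambda_2$ deduced from $s$ by the semigroup law of $\mon$. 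Granting that this family is well defined, associativity would follow from the associativity of the action of $A$ on $M$ together with the associativity constraints of $\tens$, since on each summand both composites factor through the same action morphism of $M$. Likewise the $A$-linearity of $\epsilon_M$ would be immediate on each summand: precomposing $\epsilon_M(\lambda_1+\lambda_2)$ with the relevant $\sigma$ gives $M(\lambda_1+s)$ by the definition of $\epsilon_M$, while $\epsilon_M(\lambda_2)$ restricted to the summand is $M(s)$, so the square expressing linearity commutes by the functoriality of the action morphism $A(\lambda_1)\tens M(-)\to M(\lambda_1+-)$ applied to $s$.

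The main obstacle is the functoriality of the family $\beta_{\lambda_1,\lambda_2}$, that is, the assertion that these componentwise morphisms actually glue to a morphism $A\tens\Rees(M)\to\Rees(M)$ in $\Fct(\mon,\shc)$. Because the tensor product of $\Fct(\mon,\shc)$ is computed as a colimit over the pairs with $\lambda_1+\lambda_2\le\lambda$, being such a morphism imposes a compatibility relating the component attached to $\lambda_1$ with the one attached to any larger $\bar\lambda_1$; in terms of the direct-sum decomposition this is a genuine constraint on \emph{which} summand of the target receives a given summand, and getting this index-matching right is the crux of the argument. I expect this to be precisely the step where the hypotheses on $\mon$ (the semigroup law being a functor on the preordered set) and the fact that $A=\iota FA$ is a \emph{filtered} ring — so that its transition morphisms are monomorphisms — must be brought to bear, possibly forcing the naive choice of $\sigma$ above to be corrected. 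By contrast, once the action is known to be a well-defined morphism, the verifications of associativity and of the $A$-linearity of $\epsilon_M$ are purely formal.
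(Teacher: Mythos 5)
Your construction of the action is exactly the paper's: the same identification $A(\lambda_1)\tens\Rees(M)(\lambda_2)\simeq\bigoplus_{s_2\cl\lambda'_2\to\lambda_2}A(\lambda_1)\tens M(\lambda'_2)$ via commutation of $\tens$ with direct sums, the same componentwise use of the action $A(\lambda_1)\tens M(\lambda'_2)\to M(\lambda_1+\lambda'_2)$, and the same re-indexing morphism, which the paper characterizes by $v\comp\sigma_{s_2}=\sigma_{\id_{\lambda_1}+s_2}$; your summand-by-summand check of the $A$-linearity of $\epsilon_M$ is also the intended one. After writing down these formulas the paper simply asserts that it is ``easily verified'' that one obtains an $A$-module structure; it does not discuss the gluing question you single out.

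That question, however, is a genuine gap, and your proposal leaves it open (``granting that this family is well defined\ldots''). A morphism $A\tens\Rees(M)\to\Rees(M)$ in $\Fct(\mon,\shc)$ is, at each $\lambda$, a morphism out of $\indlim[\lambda_1+\lambda_2\le\lambda]A(\lambda_1)\tens\Rees(M)(\lambda_2)$, so the family $\beta_{\lambda_1,\lambda_2}$ must be natural with respect to the morphisms $(\lambda_1,\lambda_2)\to(\bar\lambda_1,\lambda_2)$ of the index category, not only in $\lambda_2$. On the summand indexed by $s_2\cl\lambda'_2\to\lambda_2$, the composite passing through $(\lambda_1,\lambda_2)$ factors through the inclusion of the summand $M(\lambda_1+\lambda'_2)$ of $\Rees(M)(\bar\lambda_1+\lambda_2)$, while the composite passing through $(\bar\lambda_1,\lambda_2)$ factors through the inclusion of the summand $M(\bar\lambda_1+\lambda'_2)$; when these indices differ, the two composites can agree only if both vanish. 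This already fails for $\mon=\N$, $\shc=\Mod(\C)$ and $A=M$ the constant functor $\C$: the leg at $(1,0)$ sends the generator of $A(1)\tens\Rees(M)(0)$ into the summand $M(1)$ of $\Rees(M)(1)$, whereas the leg at $(0,0)$ followed by the transition morphism sends it into the summand $M(0)$. Neither the semigroup law of $\mon$ nor the fact that the transition morphisms of $A=\iota FA$ are monomorphisms repairs this, so your suspicion that the naive choice of target summand must be corrected is well founded --- but no correction is supplied, and the well-definedness of the action, which is the whole content of the proposition, is not established. (To be fair, the paper's own proof offers no more on this point than you do.)
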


\begin{proof}
We define the action of $FA$ on $\Rees(M)$ as the composition of the morphisms
\begin{align}
A(\lambda_1)\tens\Rees(M)(\lambda_2)
&= A(\lambda_1)\tens\bigoplus_{s_2\cl\lambda'_2\to\lambda_2} M(\lambda'_2)\notag\\
&\simeq\bigoplus_{s_2\cl\lambda'_2\to\lambda_2}A(\lambda_1)\tens M(\lambda'_2)\tag{*}\\
&\to\bigoplus_{s_2\cl\lambda'_2\to\lambda_2}M(\lambda_1+\lambda'_2)\tag{**}\\
&\to[v]\bigoplus_{s_3\cl\lambda'_3\to\lambda_1+\lambda_2}M(\lambda'_3)\tag{***}\\
&=\Rees(M)(\lambda_1+\lambda_2)\notag
\end{align}
where (*) comes from the fact that $\tens$ commutes with small inductive limits, (**) comes from the action of $A$ on $M$ and (***) is characterized by the fact that
\[
	v\comp\sigma_{s_2} =\sigma_{\id_{\lambda_1}+s_2}
\]
where $\id_{\lambda_1}+s_2\cl\lambda_1+\lambda'_2\to\lambda_1+\lambda_2$ is the map induced by 
$s_2\cl\lambda'_2\to\lambda_2$.
It is then easily verified that this action turns $\Rees(M)$ into an $A$-module for which the morphism 
$\epsilon_M\cl\Rees(M)\to M$ becomes $A$-linear.
\end{proof}

The following results can now be obtained by working as in Section~\ref{section:filtobj}.

\begin{corollary}
Let $FA$ be a $\mon$-filtered ring of $\shc$. Then the category $\Mod(FA)$ is a $\kappa_A$-projective subcategory of 
$\md[A]$. In particular the functor
\[
	\kappa_A\cl\Mod(A)\to\Mod(FA)
\]
is explicitly left derivable. Moreover, it has finite cohomological dimension.
\end{corollary}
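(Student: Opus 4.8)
The plan is to verify the three conditions (a), (b) and (c) of Definition~\ref{def:Fprojectcat} for the inclusion $\Mod(FA)\subseteq\Mod(A)$ together with the functor $\kappa_A$, exactly in the spirit of the corresponding statement already proved for $\kappa\cl\Fct(\mon,\shc)\to\Fil_\mon\shc$. The central device will be the two forgetful functors $\For\cl\Mod(A)\to\Fct(\mon,\shc)$ and $\For\cl\Mod(FA)\to\Fil_\mon\shc$: the first is exact, faithful, conservative and commutes with kernels and cokernels, the second is faithful, conservative and commutes with kernels and with small inductive limits (in particular with cokernels); they allow each module-theoretic assertion to be read off from its functor-theoretic counterpart established in Section~\ref{section:filtobj}. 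I also record that $\Mod(A)$ is abelian, so there strict epimorphisms are just epimorphisms and strict exactness is ordinary exactness.

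For condition (a), let $M\in\Mod(A)$. The preceding proposition endows $\Rees(M)$ with an $A$-module structure for which $\epsilon_M\cl\Rees(M)\to M$ is $A$-linear; by Proposition~\ref{prp:rfunctor} the underlying functor of $\Rees(M)$ lies in $\Fil_\mon\shc$, so $\Rees(M)\in\Mod(FA)$, and by Proposition~\ref{prp:repsilon} the underlying morphism $\For(\epsilon_M)$ is a (split) epimorphism; hence, $\For$ being conservative and commuting with cokernels, $\epsilon_M$ is an epimorphism in $\Mod(A)$, that is, a strict epimorphism. For condition (b), if $0\to X'\to X\to X''\to 0$ is exact in $\Mod(A)$ with $X\in\Mod(FA)$, then applying $\For$ and using that it commutes with kernels realizes $\For(X')$ as a subobject of $\For(X)\in\Fil_\mon\shc$; since $\Fil_\mon\shc$ is stable by subobjects (Proposition~\ref{prp:iota}), we get $\For(X')\in\Fil_\mon\shc$, that is $X'\in\Mod(FA)$.

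Condition (c) carries the real content and is the module analogue of Proposition~\ref{prp:filteredobjectsarekappaacyclic}. Starting from an exact sequence $0\to X'\to X\to X''\to 0$ in $\Mod(A)$ all of whose terms lie in $\Mod(FA)$ (it will in fact suffice that $X''$ be filtered), I would apply the exact functor $\For\cl\Mod(A)\to\Fct(\mon,\shc)$ to obtain an exact sequence with last term in $\Fil_\mon\shc$, and then invoke Proposition~\ref{prp:filteredobjectsarekappaacyclic} to deduce that $0\to\kappa\For(X')\to\kappa\For(X)\to\kappa\For(X'')\to 0$ is strictly exact in $\Fil_\mon\shc$. The step requiring the most care---and the main obstacle---is the identification $\For\comp\kappa_A\simeq\kappa\comp\For$ of underlying objects, compatibly with the actions; this is precisely the sense in which $\kappa_A$ is induced by $\kappa$, and it rests on Proposition~\ref{prp:kappatens}. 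Granting it, the displayed strictly exact sequence is $\For$ applied to $0\to\kappa_A(X')\to\kappa_A(X)\to\kappa_A(X'')\to 0$; since the forgetful functor $\For\cl\Mod(FA)\to\Fil_\mon\shc$ is conservative and commutes with kernels and cokernels, it commutes with the formation of images and coimages and reflects isomorphisms, hence reflects strict exactness. Therefore the last sequence is strictly exact in $\Mod(FA)$, which is condition (c).

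Once (a), (b) and (c) are established, $\Mod(FA)$ is $\kappa_A$-projective and hence, by the remark following Definition~\ref{def:Fprojectcat}, $\kappa_A$ is explicitly left derivable. For the finiteness of the cohomological dimension I would argue as in the corresponding corollary of Section~\ref{section:filtobj}: for any $M\in\Mod(A)$ the kernel $K\eqdot\ker(\epsilon_M)$ is a subobject of $\Rees(M)\in\Mod(FA)$, so $K\in\Mod(FA)$ by condition (b), and $0\to K\to\Rees(M)\to M\to 0$ is a two-term resolution of $M$ by objects of $\Mod(FA)$; therefore $L\kappa_A$ has cohomological dimension at most one.
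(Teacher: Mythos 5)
Your proposal is correct and follows exactly the route the paper intends: the paper's own justification is the single sentence that these results ``can now be obtained by working as in Section~3,'' and you have carried out precisely that transfer --- conditions (a), (b), (c) via the $A$-linear Rees epimorphism $\epsilon_M$, stability by subobjects, and the module analogue of Proposition~\ref{prp:filteredobjectsarekappaacyclic} read through the forgetful functors, plus the two-term resolution for the cohomological dimension bound. No gaps; your explicit handling of $\For\comp\kappa_A\simeq\kappa\comp\For$ is the one point the paper leaves implicit, and you identify it correctly.
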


\begin{theorem}\label{th:main2}
Assume~\eqref{hyp:sect4a}.
The functor
\[
	\iota_A\cl\Mod(FA)\to\Mod(A)
\]
is strictly exact and induces an equivalence of categories for $*=\ub,\rb,+,-$:
\[
 \iota_A\cl \RD^*(\md[FA])\to \RD^*(\md[A]) 
\]
whose quasi-inverse is given by
\[
 L\kappa_A\cl \RD^*(\md[A])\to \RD^*(\md[FA]).
\]
Moreover, $\iota_A$ 
induces an equivalence of abelian categories
\[
	\LH(\md[FA])\simeq\md[A].
\]
\end{theorem}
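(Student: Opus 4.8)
The plan is to reduce Theorem~\ref{th:main2} to Theorem~\ref{th:main1} by exploiting the forgetful functors, which are exact and detect essentially everything we care about. The strategy mirrors Section~\ref{section:filtobj} exactly: we already have at our disposal the Rees functor equipped with an $A$-module structure (the final Proposition before the corollary), the split epimorphism $\epsilon_M\cl\Rees(M)\to M$, and the corollary asserting that $\Mod(FA)$ is a $\kappa_A$-projective subcategory of $\Mod(A)$ with finite cohomological dimension. These are precisely the module-theoretic analogues of Propositions~\ref{prp:repsilon}, \ref{prp:iota}, \ref{prp:filteredobjectsarekappaacyclic} used to prove Theorem~\ref{th:main1}.

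First I would establish that $\iota_A$ is strictly exact. A sequence in $\Mod(FA)$ is strictly exact if and only if its image under $\For\cl\Mod(FA)\to\Fil_\mon\shc$ is strictly exact, since $\For$ is exact and conservative and commutes with kernels and cokernels; likewise strict exactness in $\Mod(A)$ is tested by $\For\cl\Mod(A)\to\Fct(\mon,\shc)$. The square relating the two forgetful functors through $\iota$ and $\iota_A$ commutes, so strict exactness of $\iota_A$ reduces to strict exactness of $\iota$, which is part of Theorem~\ref{th:main1}. Consequently $\iota_A$ descends to a triangulated functor $\iota_A\cl\RD^*(\Mod(FA))\to\RD^*(\Mod(A))$, and since $\kappa_A$ is explicitly left derivable of finite cohomological dimension by the corollary, $L\kappa_A$ is defined on the unbounded derived category as well.

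Next I would prove that $\iota_A$ and $L\kappa_A$ are mutually quasi-inverse. The isomorphism $\kappa_A\comp\iota_A\simeq\id$ holds at the level of categories (it follows from $\kappa\comp\iota\simeq\id$ and the adjunction), and since objects of $\Mod(FA)$ are $\kappa_A$-acyclic we get $L\kappa_A\comp\iota_A\simeq\id$ on derived categories. For the other composite, the adjunction morphism $\epsilon$ gives a natural transformation $\iota_A\comp L\kappa_A\to\id$, and to see it is an isomorphism on any $M\in\RD^*(\Mod(A))$ it suffices, by conservativity of $\For$, to check it after applying the forgetful functor to $\Fct(\mon,\shc)$. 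There the statement becomes exactly the corresponding assertion of Theorem~\ref{th:main1}, because $\For$ commutes with both $\kappa$/$\kappa_A$ (via the adjunctions and Proposition~\ref{prp:kappatens}) and with $\iota$/$\iota_A$, and because a $\kappa$-projective resolution of $\For(M)$ built from Rees objects lifts to a $\kappa_A$-projective resolution of $M$ by the $A$-linear Rees construction. Finally, the equivalence on hearts $\LH(\Mod(FA))\simeq\Mod(A)$ follows formally: $\iota_A$ is left exact and its derived equivalence restricts to an equivalence of the left hearts, which on the target side is $\Mod(A)$ itself since that category is already abelian.

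The main obstacle is the compatibility of the forgetful functors with the derived adjunction, that is, verifying that $\For$ intertwines $L\kappa_A$ with $L\kappa$. This requires that a $\kappa_A$-projective (Rees-type) resolution in $\Mod(A)$ be carried by $\For$ to a $\kappa$-projective resolution in $\Fct(\mon,\shc)$, which hinges on the fact that $\For$ preserves the split epimorphisms $\epsilon_M$ and that the underlying functor of an $A$-linear Rees object is the ordinary Rees object. Once this bookkeeping is in place, every remaining verification is a transfer along the exact conservative functor $\For$ of a statement already proved in Theorem~\ref{th:main1}, so no new homological input is needed.
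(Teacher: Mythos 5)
Your proposal is correct and follows essentially the route the paper intends: the paper gives no separate proof of Theorem~\ref{th:main2}, stating only that it is ``obtained by working as in Section~\ref{section:filtobj}'' once the $A$-linear Rees resolution, the split epimorphism $\epsilon_M$, and the $\kappa_A$-projectivity (with finite cohomological dimension) of $\Mod(FA)$ in $\Mod(A)$ are in place, and these are exactly the ingredients you use. Your extra bookkeeping via the forgetful functors (which are conservative, commute with kernels and cokernels, and carry the $A$-linear Rees resolution to the ordinary one) is a legitimate and slightly more explicit way of carrying out that reduction, with only the cosmetic imprecision that the natural isomorphism $\iota_A\comp L\kappa_A\simeq\id$ comes from comparing a $\Mod(FA)$-resolution $P\to M$ with $M$ rather than directly from the counit of the adjunction.
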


\begin{remark}\label{rem:Groth}
Assume that the semigroup $\mon$ admits a unit, denoted $0$, and the tensor category $\shc$ admits a unit, denoted 
 $\bfone$. Then the unit $\bfone_\mon$ of the category $\Fct(\mon,\shc)$ 
 (see Remark~\ref{rem:unitinmonrings}) belongs to $\Fil_\mon\shc$ and is a unit in this tensor category.
 In such a case, the notion of a $\mon$-filtered ring $FA$ with unit $e$ makes sense as well as the notion of $FA_e$-module. 
 
 Moreover, the results of Theorem~\ref{th:main2} remain true with $\md[FA]$ and $\md[A]$ replaced with 
 $\md[FA_e]$ and $\md[A_e]$, respectively. 
 
Assume moreover that $\shc$ is a Grothendieck category. In this case, $\md[A_e]$ is again a Grothendieck 
category by Proposition~\ref{pro:generator}.
\end{remark}

\subsubsection*{Example: modules over a filtered sheaf of rings}
Let $X$ be a site and let $\cor$ be a commutative unital algebra with finite global dimension. Consider the category 
$\shc=\md[\cor_X]$ of sheaves of $\cor_X$-modules and its derived category, $\RD(\cor_X)$. 
Let $\mon$ be as in~\eqref{hyp:sect4a}.

\begin{definition}
\banum
\item
A $\mon$-filtered sheaf $F\shf$ is a sheaf $\shf\in\md[\cor_X]$ endowed with a family of subsheaves 
$\{F_\lambda\shf\}_{\lambda\in\mon}$ 
such that $F_{\lambda'}\shf\subset F_{\lambda}\shf$ for any pair $\lambda'\leq\lambda$ and $\bigcup_jF_\lambda \shf=\shf$. 
\lp Of course,  the union $\bigcup$ is taken in the category of sheaves.\rp
\item
A $\mon$-filtered sheaf of $\cor_X$-algebras $F\shr$  is a filtered sheaf such that the underlying sheaf $\shr$ is a sheaf of 
$\cor_X$-algebras and $F_\lambda\shr\tens F_{\lambda'}\shr\subset F_{\lambda+\lambda'}\shr$ for all $\lambda,\lambda'\in\mon$. 
 (In particular, $F_0\shr$ is a subring of $\shr$.)
 \item
Given $F\shr$ as above, a left filtered module $F\shm$ over $F\shr$ is filtered sheaf such that the underlying sheaf $\shm$
is a sheaf of modules over $\shr$ and  $F_\lambda\shr\tens F_{\lambda'}\shm\subset F_{\lambda+\lambda'}\shm$ for all 
$\lambda,\lambda'\in\mon$.
\item
If  $\shr$ is unital, we ask that the unit of $\shr$ is a section of 
$F_0\shr$ and acts as the identity on each $F_{\lambda}\shm$.
\eanum
\end{definition}
The category $\md[F\shr]$ of filtered modules over $F\shr$ is quasi-abelian. 

On the other-hand, an object 
$F\shn$ of the abelian category $\md[\iota F\shr]$ is the data of a family of sheaves $\{F_\lambda\shn\}_{\lambda\in\mon}$, morphisms 
$F_\lambda\shn\to F_{\lambda'}\shn$ for any pair $\lambda\leq\lambda'$ and morphisms 
$F_\lambda\shr\tens F_{\lambda'}\shn\to F_{\lambda+\lambda'}\shn$ for all  $\lambda,\lambda'\in\mon$ satisfying the natural compatibility conditions but we do not ask any more that 
$F_{\lambda'}\shn$ is a subsheaf of $F_{\lambda}\shn$ for  $\lambda'\leq\lambda$.

By Theorem~\ref{th:main2}, we have an equivalence of categories for $*=\ub,\rb,+,-$:
\eqn
&&\RD^*(\md[F\shr])\isoto\RD^*(\md[\iota F\shr]).
\eneqn

\begin{example}
Let $(X,\sho_X)$ be a complex manifold and let $\shd_X$ be the sheaf of finite order differential operators. 

We apply the preceding construction to the tensor category $\md[\C_X]$. 
For $j\in\Z$, we denote by $F_j\shd_X$ the subsheaf of $\shd_X$ whose sections are differential operators of order $\leq j$, 
with $F_j\shd_X=0$ for $j<0$, 
and we  denote by $F\shd_X$ the ring $\shd_X$ endowed with this filtration. Recall that a filtered left $\shd_X$-module $F\shm$ 
is a left $\shd_X$-module $\shm$ endowed with a family of subsheaves $F_j\shm$ ($j\in\Z$) and morphisms 
$F_i\shd_X\tens F_j\shm\to F_{i+j}\shm$ satisfying natural compatibility conditions 
(the $F_j\shm$'s are thus $\sho_X$-modules) and such that $\bigcup_jF_j\shm=\shm$. Therefore, 
$F\shd_X$ is a $\Z$-ring in $\md[\C_X]$ and $F\shm$ is an $F\shd_X$-module, that is, an object of 
$\md[F\shd_X]$. 
\end{example}
\providecommand{\bysame}{\leavevmode\hbox to3em{\hrulefill}\thinspace}

\vspace*{1cm}
\noindent
\parbox[t]{20em}
{\scriptsize{
\noindent
Pierre Schapira\\
Institut de Math{\'e}matiques,
Universit{\'e} Pierre et Marie Curie\\
and\\
Mathematics Research Unit, \\
University of Luxemburg\\
e-mail: schapira@math.jussieu.fr\\
http://www.math.jussieu.fr/\textasciitilde schapira/

\vspace{2ex}\noindent
Jean-Pierre Schneiders\\
Institut de Math\'ematiques\\
Universit\'e de Li\`ege\\
email: jpschneiders@ulg.ac.be\\
http://www.analg.ulg.ac.be/jps/
}}


\begin{thebibliography}{10}
%
\bibitem[BBD82]{BBD82}
A.~Beilinson, J.~Bernstein, and P.~Deligne, 
\textit{Faisceaux pervers},
in Analysis and topology on singular spaces, Luminy 1981,
Ast{\'e}risque  {\bf 100}, Soc. Math. France (1982).
%

\bibitem[GS12]{GS12} S. Guillermou and P.~Schapira,
{\em Subanalytic topologies I. Construction of sheaves,}
arXiv:math.arXiv:1212.4326 

\bibitem[Ka03]{Ka03} M.~Kashiwara,
{\em D-modules and Microlocal Calculus,}
\/Translations of Mathematical Monographs, {\bf 217} American Math. Soc. (2003).

\bibitem[Ka08]{Ka08} \bysame, 
\textit{Equivariant derived category and representation of 
real semisimple Lie groups}, 
in: Representation Theory and Complex Analysis, Lecture Notes in 
Mathematics, Springer, {\bf 1931} Springer, 2008.

%
\bibitem[KS90]{KS90} M.~Kashiwara and P.~Schapira,
{\em Sheaves on Manifolds,}
\/Grundlehren der Math. Wiss. {\bf 292} Springer-Verlag (1990).



%
%
\bibitem[KS06]{KS06} M.~Kashiwara and P.~Schapira,
{\em Categories and Sheaves,}
\/Grundlehren  der Math. Wiss. Springer-Verlag (2006).

\bibitem[La83]{La83} G.~Laumon, 
{\em Sur la cat\'egorie d\'eriv\'ee des D-modules filtr\'es,}
Algebraic geometry (Tokyo/Kyoto, 1982), 
Lecture Notes in Math., {\bf 1016}, Springer, Berlin (1983) pp.~151--237. 
 
 \bibitem[Qu73]{Qu73} D.~Quillen,
 {\em Higher algebraic $K$-theory I,}
  Lecture Notes in Math., {\bf 341}, Springer, Berlin (1973) pp.~85--147.
   
\bibitem[Sn99]{Sn99} J.-P. Schneiders,  
{\em Quasi-abelian categories and sheaves},
M{\'e}m. Soc. Math. Fr. (N.S.) {\bf 76} (1999).
%

\end{thebibliography}
\end{document}